\documentclass[12pt]{amsart}

\usepackage[english]{babel}
\usepackage[utf8]{inputenc}
\usepackage{amsmath, amssymb, amsthm, amsfonts}
\usepackage{ulem}
\usepackage{tikz-cd}
\usepackage{hyperref}

\newtheorem{theorem}{Theorem}[section]
\newtheorem{lemma}[theorem]{Lemma}
\newtheorem{proposition}[theorem]{Proposition}

\theoremstyle{definition}
\newtheorem{definition}[theorem]{Definition}

\theoremstyle{remark}
\newtheorem{remark}[theorem]{Remark}

\newcommand{\Hyp}{\operatorname{Hyp}}
\newcommand{\Ell}{\operatorname{Ell}}

\newcommand{\PSL}{\operatorname{PSL}}
\newcommand{\Aut}{\operatorname{Aut}}
\newcommand{\Hom}{\operatorname{Hom}}
\newcommand{\Epi}{\operatorname{Epi}}
\newcommand{\Red}{\operatorname{Red}}
\newcommand{\Fix}{\operatorname{Fix}}

\newcommand{\bbQ}{\mathbb{Q}}

\newcommand{\bbF}{\mathbb{F}}

\hypersetup{pdftitle={Redundant generation in PSL2(Qp)}, pdfauthor={Yair Glasner, Amit Levinson-Sela}}
\title[Redundant generation in $\PSL_2\left(\bbQ_p\right)$]{Redundant generation in $\PSL_2\left(\bbQ_p\right)$}

\author[Glasner]{Yair Glasner}
\address{Ben Gurion University of the Negev.
	Department of Mathematics.
	Be'er Sheva, 8410501, Israel.
}
\email{yairgl@bgu.ac.il}

\author[Levinson-Sela]{Amit Levinson-Sela}
\address{Ben Gurion University of the Negev.
	Department of Mathematics.
	Be'er Sheva, 8410501, Israel.
}
\email{levinsam@post.bgu.ac.il}

\date{}

\begin{document}

\begin{abstract}
Given a free group on $n$ generators, $F_n$, and a topological group $G$, $\Aut\left(F_n\right)$ acts on $\Epi\left(F_n,G\right)$, the set of homomorphisms $f:F_n \rightarrow G$ with dense image. 
$f\in \Epi\left(F_n,G\right)$ is called redundant if there is a proper free factor $A < F_n$ whose image under $f$ is dense in $G$.
For $n \ge 3$, $G = \PSL_2 \left(\bbQ_p\right)$, we prove that every $f\in \Epi\left(F_n,G\right)$ with torsion-free image is redundant, and give sufficient conditions for $f$ to be redundant even when its image is not torsion-free. 
\end{abstract}
\maketitle

\section{Introduction}
Let $V = \operatorname{span}\left(v_1,\dots,v_n\right)$ be a finite dimensional vector space. Then $\dim V \le n$, and Gaussian elimination gives a way to compute $\dim V$ precisely: performing it on the matrix with rows $\left(v_1,\ldots,v_n\right)$, one arrives at a new matrix whose nonzero rows form a basis of the given space. It is natural to attempt similar methods in the setting of (topologically) finitely generated groups, a procedure which is loosely referred to as the ``Nielsen method''.

Let $F_n=\left< x_1,x_2,\dots,x_n \right>$ be a free group on $n$ generators; its group of automorphisms, $\Aut\left(F_n\right)$, admits a generating set, known as Nielsen transformations. These are the following automorphisms, reminiscent of elementary row operations:
\begin{itemize}
    \item $x_i \rightarrow x_i ^{-1}$ for some $1\le i \le n$, $x_k \rightarrow x_k$ for all $k\neq i$;
    \item $x_i \rightarrow x_j,x_j \rightarrow x_i$ for some $1\le i \neq j \le n$, $x_k \rightarrow x_k$ for all $k \neq i, k \neq j$;
    \item $x_i \rightarrow x_ix_j$ or $x_i \rightarrow x_jx_i$ for some $1\le i\le n$, $j\neq i$, $x_k \rightarrow x_k$ for all $k\neq i$.
\end{itemize}

Let $G$ be a topological group. We denote by $\Hom(F_n,G)$ the set of all homomorphisms from $F_n$ to $G$. We will refer to $\Hom(F_n,G)$ as the representation variety of $F_n$ into $G$\footnote{When $G$ is an algebraic group this name is often reserved for the scheme parameterizing all such representations, but we will not use this terminology here.}. The natural identification $\Hom(F_n,G) \cong G^n$  endows the representation variety with the product topology. Denote by $\Epi(F_n,G) \subset \Hom(F_n,G)$ the set of all topological epimorphisms, i.e. those homomorphisms whose image is dense. Under the above identification, $\Epi(F_n,G)$ is identified with tuples $(g_1,\ldots,g_n) \in G^n$ that generate a dense subgroup of $G$.

There is a continuous action of $\Aut\left(F_n\right)$ on $\Hom\left(F_n,G\right)$ by precomposition: if $\varphi\in\Aut\left(F_n\right),f\in\Hom\left(F_n,G\right)$, then $\varphi\left(f\right)=f\circ\varphi^{-1}\in\Hom\left(F_n,G\right)$. On $G^n$ this action can be understood explicitly as the action of Nielsen transformations on the set of $n$-tuples. Clearly this action restricts to an action on the invariant set $\Epi\left(F_n,G\right)$. The study of this action has two basic motivations: to understand $\Aut\left(F_n\right)$, and to understand generating sets of a given group $G$.

Recall that a subgroup $A<F_n$ is called a free factor if there exists $B<F_n$ such that $F_n=A*B$ (it is a proper free factor if $B\neq 1$). Equivalently $A<F_n$ is a free factor if and only if there exists $\varphi \in \Aut\left(F_n\right), 1\le k \le n$ such that $A =  \left< \varphi \left(x_1\right) , \dots , \varphi \left(x_{k}\right) \right>$. An element $x\in F_n$ is called primitive if $\left<x\right>$ is a free factor.
\begin{definition} With the above notation, $f\in \Epi\left(F_n,G\right)$ is called a \textit{redundant} representation if there exists a proper free factor $A<F_n$ such that $\overline{f(A)}=\overline{f(F_n)} = G$. Analogously, a topological generating tuple $S=(g_1, \dots , g_n)$ is called \textit{redundant} if there exists a Nielsen-equivalent tuple $S'=(g_1',\dots,g_n')$ such that $G = \overline{\left< g_2',\ldots, g_n' \right>}$. We denote by $\Red\left(F_n,G\right) \subset \Epi\left(F_n,G\right)$ the subset of redundant representations.
\end{definition}

 Note that if $(g_1,\dots,g_n )$ is redundant, then $n > d\left(G\right)$. It is natural to ask whether the converse also holds: when $n > d\left(G\right)$, is every $n$-tuple which generates a dense subgroup necessarily redundant? For $G=\PSL_2 \left(\bbQ_p\right)$ we give a positive answer under restrictions on the finite order elements of the dense subgroup.
\begin{theorem} \label{thm:torsion_free_redundancy}
    Let $n\ge3$, $p$ be a prime and $G = \PSL_2(\bbQ_p)$. Then every $f \in \Epi(F_n,G)$ for which $\Gamma := f(F_n)$ is torsion-free is redundant.
\end{theorem}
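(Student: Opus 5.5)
The plan is to use the action of $G=\PSL_2(\bbQ_p)$ on its Bruhat--Tits tree $T$, the $(p+1)$-regular tree. Since $\Gamma$ is torsion-free, no nontrivial element of $\Gamma$ is contained in a compact subgroup. Every elliptic element of $G$ fixes a vertex, so it lies in a compact open subgroup $\PSL_2(\mathbb{Z}_p)$-conjugate. A nontrivial torsion-free element of such a compact group can still exist, because compact $p$-adic groups are not torsion. So the first reduction I would make is different. Since $\Gamma$ is dense, it is not discrete. Density also means $\Gamma$ is not contained in any proper closed subgroup, so it fixes no vertex, no end and no pair of ends. In particular the action of $\Gamma$ on $T$ is \emph{minimal} and of general type.

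The strategy is a Nielsen-reduction argument of Gaussian-elimination type, driven by the tree. The key known density criterion I would invoke is this: a subgroup $H<G$ is dense if and only if it is non-discrete and Zariski-dense, meaning it is not virtually solvable. That holds because the closure of $H$ is then an open, non-compact, non-amenable subgroup. By the Howe--Moore-type/Tits result, such a subgroup of $\PSL_2(\bbQ_p)$ either equals $G$ or is compact-by-solvable, and being open and unbounded forces it to equal $G$. So the target is to Nielsen-move $(g_1,\dots,g_n)$ into a tuple whose last $n-1$ entries generate a subgroup that is (i) non-elementary on $T$ and (ii) non-discrete.

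First I would handle condition (i). Using ping-pong on $T$ and the fact that $\Gamma$ is of general type, I would Nielsen-transform so that $g_2,g_3$ are hyperbolic with disjoint sets of fixed ends. Such a pair generates a non-elementary (Schottky-type) free subgroup, which is Zariski dense. This uses $n\ge3$: the first slot stays free to absorb the ``discreteness'' defect.

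Then I would handle condition (ii), which I expect to be the main obstacle. The subgroup $\langle g_2,\dots,g_n\rangle$ might be discrete, i.e.\ a free lattice-like group acting with finite vertex stabilizers. Torsion-freeness makes those stabilizers trivial, so in that case the group acts freely on $T$. The idea is to use that $\Gamma$ is non-discrete. Then there are words $w$ in the generators with $f(w)$ nontrivial and fixing an arbitrarily large ball, i.e.\ elliptic elements close to $1$. I would try to replace $g_1$ by $g_1 u$ with $u\in\langle g_2,\dots,g_n\rangle$, or perform a transformation $g_2\mapsto g_2 g_1^{k}$. The goal is that the new subgroup generated by the last $n-1$ entries contains a nontrivial elliptic element while keeping the ping-pong hyperbolics.

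Concretely, I would run a Nielsen reduction with respect to translation lengths and displacement of a base vertex, in the spirit of Weidmann's tree version of Nielsen's method. Take a minimal tuple. The Bass--Serre-type folding argument says that if no elliptic element is obtained in a proper sub-tuple, then the whole group acts with a free quotient graph, and hence discretely. This contradicts density. The hard part is making the fold produce the elliptic inside the span of $n-1$ of the elements rather than only in the full group. Here I would use torsion-freeness to rule out the exceptional folds involving finite stabilizers, so that every elliptic element arising in a minimal tuple is forced to be a single generator. That generator can then be placed, together with the two hyperbolics, into $n-1\ge 2$ slots. In fact the elliptic and one hyperbolic already generate a non-discrete non-elementary group when the hyperbolic's axis leaves the elliptic's fixed set. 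This is where the bookkeeping is likely to be delicate.
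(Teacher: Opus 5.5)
You have the right target and the right source of nondiscreteness. A proper free factor with irreducible, nondiscrete image is dense by Lemmas~\ref{lem:nonelementary_zariski_dense} and~\ref{lem:zariski_trichotomy}. Your intermediate criterion ``nondiscrete and Zariski-dense implies dense'' is false as stated, e.g.\ for $\PSL_2(\mathbb{Z}_p)$; irreducibility supplies the missing non-precompactness. Weidmann's reduction (Theorem~\ref{thm:weid} with no elliptic blocks) gives, since $\Gamma$ is nondiscrete, a Nielsen-equivalent tuple with an elliptic entry $g'_n$. This entry has infinite order by torsion-freeness, so any subgroup containing a conjugate of $g'_n$ is nondiscrete. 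This already places the elliptic in a single entry, so the difficulty you flag is not the real one.

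The real gap is making the \emph{same} proper free factor irreducible. Your step (i) is only asserted. Moreover, the Weidmann reduction in step (ii) rewrites the whole tuple, so whatever hyperbolic pair (i) produced is lost; for $n=3$ there is not even room for it next to the elliptic. Your fallback is false: it claims that the elliptic and a hyperbolic whose axis leaves its fixed set generate a non-elementary group. Take $u=\left(\begin{smallmatrix}1&1\\0&1\end{smallmatrix}\right)$ and $d=\left(\begin{smallmatrix}p&0\\0&p^{-1}\end{smallmatrix}\right)$. Then $\Fix(u)$ meets the axis of $d$ only in a ray, yet $u$ and $d$ generate a torsion-free upper triangular group fixing an end. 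What is needed is that the elliptic part stabilize neither endpoint of the axis nor the pair of endpoints. You also need a Nielsen mechanism arranging this without losing the elliptic.

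The missing idea is to reverse your order: nondiscreteness survives conjugation, while irreducibility has to be engineered. With $g'_n$ elliptic, the paper applies Proposition~\ref{prop:irreducible_detection} to the blocks $\{g'_1\},\{g'_2\},\{g'_3,\dots,g'_n\}$. This never touches the last block $\Delta$, and it makes some pair of blocks irreducible. If that pair involves $\Delta$ you are done. Otherwise $\langle g'_1,g'_2\rangle$ is irreducible, and one may take $g'_2$ hyperbolic with ends $x,y$. Pick $\delta\in\Delta$ and an open set $O$ in the limit set of $\langle g'_1,g'_2\rangle$ with $\delta O\cap O=\emptyset$. Pick a hyperbolic $\theta\in\langle g'_1,g'_2\rangle$ with repelling point in $O$ and attracting point outside $\{x,y\}$, and set $\gamma=\theta^k$ with $k\gg0$. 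Then $\gamma^{-1}x,\gamma^{-1}y\in O$, so $\gamma\Delta\gamma^{-1}$ stabilizes none of $x$, $y$, $\{x,y\}$. Hence $\langle g'_2,\gamma\Delta\gamma^{-1}\rangle$ is irreducible (Proposition~\ref{prop:upgrade_to_irreducible}). Conjugating $g'_3,\dots,g'_n$ by $\gamma\in\langle g'_1,g'_2\rangle$ is a product of Nielsen moves. The resulting factor still contains the infinite-order elliptic $\gamma g'_n\gamma^{-1}$, so it is dense. Without an argument of this kind, your proposal does not establish the theorem.
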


In the case that $\Gamma$ does have torsion, 
we can prove a similar statement under some geometric restrictions:

\begin{theorem} \label{thm:finite_fix_redundancy}
    Let $n \ge 5$, $p \ge 5$ be a prime and $G = \PSL_2(\bbQ_p)$. Assume, for $f \in \Epi(F_n,G)$, that $\Fix(f(w))$ consists of finitely many vertices whenever $f(w) \in G$ is nontrivial of finite order. Then $f$ is redundant.
\end{theorem}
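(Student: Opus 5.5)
We would prove Theorem \ref{thm:finite_fix_redundancy} by working with the action of $G=\PSL_2(\bbQ_p)$ on its Bruhat--Tits tree $T$, the $(p+1)$-regular tree. The criterion for density we intend to use is the following. A subgroup $H<G$ is dense as soon as (i) $H$ is not contained in a compact subgroup, i.e.\ it contains a hyperbolic element, or it has no bounded orbit; (ii) $H$ fixes no end of $T$ and preserves no line or pair of ends; and (iii) $H$ is not discrete, e.g.\ it contains nontrivial elliptic elements with arbitrarily small fixed-point data, or its closure contains an open compact subgroup. Since the closed non-compact, non-amenable subgroups of $G$ that act minimally on $T$ are open, hence of finite index, and $\PSL_2(\bbQ_p)$ is simple as an abstract group, non-discreteness together with (i) and (ii) should force density.

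The plan proceeds by Nielsen moves. First we use the Nielsen method together with the density of $\Gamma=f(F_n)$ to reach a tuple $(g_1,\dots,g_n)$ whose first few entries already generate a subgroup that is non-elementary on $T$. To do this we use ping-pong: find two hyperbolic elements with disjoint axes, say in $\langle g_1,g_2\rangle$ after moves. Producing hyperbolics from elliptics uses the standard fact that if $\Fix(a)\cap\Fix(b)=\emptyset$, then $ab$ is hyperbolic. This is exactly where the hypothesis enters. Every nontrivial torsion element of $\Gamma$ fixes only a finite set of vertices, so elliptic generators of finite order have bounded fixed sets. By multiplying them with powers of a hyperbolic element, we can push fixed sets apart and replace finite-order generators by hyperbolic ones via moves $g_i\mapsto g_i h^k$. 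The condition $p\ge5$ should be used to control which finite-order elements can occur. For $p\ge5$, torsion in $\PSL_2(\bbQ_p)$ has order dividing $p\pm1$ or small orders, and it is tame, so the fixed set of a nontrivial elliptic of finite order is a ball or a bounded set around a geodesic. The hypothesis that this set is finite then rules out elements that fix a half-line, i.e.\ unipotent-like behaviour.

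Second, we arrange that all but one generator, say $g_2,\dots,g_n$, already generate a non-elementary subgroup $H$. The aim is then to modify $g_1$ by multiplying it by words in $g_2,\dots,g_n$ so that it lies in $H$ "up to a small element", and finally to show that $\overline{H}$ is open. Here we would mimic the torsion-free case, Theorem \ref{thm:torsion_free_redundancy}. In that case every nontrivial element of $\Gamma$ is either hyperbolic or of infinite order elliptic, and an infinite-order elliptic generates a non-discrete group, since $\overline{\langle e\rangle}$ is an infinite compact group. So non-discreteness of $\langle g_2,\dots,g_n\rangle$ is obtained once some infinite-order elliptic lies in the factor. We would show that, because $\Gamma$ is dense, hence non-discrete, some Nielsen-equivalent tuple places an infinite-order elliptic, or a sequence of elliptics shrinking toward the identity, into the span of the last $n-1$ entries. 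Using $n\ge5$ gives enough spare generators: two to build the non-elementary ping-pong pair, one to serve as the conjugating or "pushing" element, and the rest to absorb the non-discreteness.

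The main obstacle is this last step: guaranteeing non-discreteness inside a proper free factor. The difficulty is that the density of $\Gamma$ may come from relations involving all $n$ generators, while torsion elements with finite fixed sets do not generate non-discrete subgroups by themselves. Our first attempt would be to look at the closure of $\Gamma$ intersected with a vertex stabilizer $K=\PSL_2(\mathbb{Z}_p)$. Since $\Gamma$ is dense, $\Gamma\cap K$ is dense in $K$, so it contains infinite-order elliptics $e=w(g_1,\dots,g_n)$. We would then use Nielsen moves, specifically conjugating $g_1$ by high powers of a hyperbolic element from the factor generated by $g_2,\dots,g_n$, to show that a suitably modified $g_1'$ can be written as $u\,e'$ with $u\in\langle g_2,\dots,g_n\rangle$ and $e'$ elliptic. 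The finite-fixed-set hypothesis is what prevents $e'$ from being forced to be torsion. Once an infinite-order elliptic, or the openness of $\overline H$, is obtained within $\langle g_2,\dots,g_n\rangle$, the density criterion of the first paragraph completes the proof.
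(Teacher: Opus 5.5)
\textbf{There is a genuine gap, at exactly the step you flag as the main obstacle.} The surrounding framework is sound. Your density criterion is the right one: it is Lemmas~\ref{lem:nonelementary_zariski_dense} and~\ref{lem:zariski_trichotomy}. Your sentence that closed non-compact non-amenable minimal subgroups are open is false without non-discreteness, since cocompact lattices are counterexamples. Your first step, building an irreducible subgroup from a few generators via Tits' lemma, is essentially Propositions~\ref{prop:irreducible_detection} and~\ref{prop:upgrade_to_irreducible}.

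What is missing is any argument that places non-discreteness inside a proper free factor.
\begin{itemize}
\item An infinite-order elliptic $e=w(g_1,\dots,g_n)\in\Gamma\cap\PSL_2(\mathbb{Z}_p)$ tells you nothing about proper free factors, since $w$ may involve every generator.
\item Writing a modified $g_1'$ as $u e'$ with $u\in\langle g_2,\dots,g_n\rangle$ does not put $e'$ into $\langle g_2,\dots,g_n\rangle$. It only replaces the first entry by an elliptic one, which does not make $\langle g_2,\dots,g_n\rangle$ any less discrete.
\item Nothing you give explains why the finite-fixed-set hypothesis would force $e'$ to have infinite order.
\item ``Moving $g_1$ into $H$ up to a small element'' is just a restatement of the goal.
\end{itemize}
You also misread the hypothesis. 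For $p\ge5$, a nontrivial finite-order element fixes either a single vertex or a bi-infinite geodesic (Proposition~\ref{prop:finite_order_fixed_sets}). So the assumption simply says that every such element fixes a unique vertex. It has nothing to do with half-lines or unipotents, which have infinite order in characteristic $0$.

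\textbf{The missing idea is to argue by contraposition.} One shows that if every free factor of rank at most $3$ has discrete image, then $\Gamma$ is discrete (Proposition~\ref{prop:discrete-from-small-free-factors}). Since $n-2\ge 3$, density then gives a Nielsen-equivalent tuple with $\langle g_3',\dots,g_n'\rangle$ non-discrete. The two remaining generators are spent on Proposition~\ref{prop:upgrade_to_irreducible}, exactly as in the torsion-free case.

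The contrapositive is proved with Weidmann's Nielsen method for groups acting on trees (Theorem~\ref{thm:weid}):
\begin{itemize}
\item Elliptic entries have finite order, because rank-one factors are discrete. So they can be grouped by their unique fixed vertex.
\item Each vertex group $U_i$ is finite, by induction on the number of its generators. This uses discreteness of rank-$3$ factors together with Wiegold-type redundancy for finite subgroups of $\PSL_2$ (Lemma~\ref{lem:wiegold_PSL}: cyclic, dihedral, $A_4$, $S_4$ via Dunwoody, and $A_5$ via Gilman). Here $p\ge5$ ensures that $p$ divides no element order.
\item Weidmann's theorem then either exhibits $\Gamma$ as a free product with finite vertex stabilizers, hence discrete, or produces a new elliptic entry or merges two vertex groups. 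The latter can happen only finitely often.
\end{itemize}
This is where the unique-fixed-vertex hypothesis is genuinely used: it makes Weidmann's trees $T_i$ single vertices, so his theorem applies on the locally finite tree $T$. It is also why $n\ge5$ enters as $n-2\ge3$ rather than through the count you propose. The finiteness induction needs rank-$3$ factors, because Lemma~\ref{lem:wiegold_PSL} needs three generators to produce a trivial entry. Nothing in your sketch plays the role of this argument.
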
 

$\Aut\left(F_n\right)$-representation varieties have been studied in different settings, with much of the investigation revolving around a conjecture of Wiegold and Goldman to the effect that the action of $\Aut(F_n)$ should be transitive for $n > d\left(G\right)$; one should consult the survey papers~\cite{Lu1} (in particular for the case of finite groups) and~\cite{Ge1} (in particular for the case of linear algebraic groups). 
It appears that the notion of redundancy is central to understanding the dynamics of these representation varieties, as discussed in~\cite{Ge1}. Furthermore, redundancy has come to be studied in its own right in different settings; for connected Lie groups and complex algebraic groups it is investigated in~\cite{CoVi}.
For $G=\PSL_2 \left(K \right)$ where $K$ is a nonarchimedean local field, and for $\Aut^{0}\left(T\right)$ the group of automorphisms of a regular infinite tree without edge inversions, the first author (\cite{Gl}) showed that almost every generating set of size $n\ge3$ is redundant, and thus the action of $\Aut\left(F_n\right)$ on $\Epi\left(F_n,G\right)$ is ergodic. 
In contrast, our Theorems~\ref{thm:torsion_free_redundancy} and~\ref{thm:finite_fix_redundancy} restrict to $\PSL_2\left(\bbQ_p\right)$ and impose further restrictions on the dense subgroup, but give a stronger answer in those cases --- one without reference to measure. 

In Section~\ref{sec:subgroups} we recall important facts about the action of $G = \PSL_2(\bbQ_p)$ on its Bruhat--Tits tree. Section~\ref{sec:weidmann} is dedicated to Weidmann's application of the Nielsen method for groups acting on trees. Finally in Sections~\ref{sec:proofs} and~\ref{sec:torsion} we give the proofs of the main theorems. 

This paper is essentially the M.Sc. thesis of the second author under the joint supervision of Prof. Michael Brandenbursky and the first author. We are greatly thankful to Michael for his constant support for this project. Both authors were partially funded by ISF grant 3187/24 as well as by BSF grant 2024317.

The results in this paper were obtained without use of AI; in fact, they all appear in the M.Sc. dissertation of the second author, which was completed in 2025. However, we did use LLM aid in reviewing the final draft of the paper version. 

\section{Subgroups of \texorpdfstring{$\PSL_2\left(\bbQ_p\right)$}{PSL2(Qp)}} \label{sec:subgroups}
We recall useful facts about subgroups of $\PSL_2\left(\bbQ_p\right)$, as proven in~\cite{CoSc}. 
Throughout this paper we consider subgroups of $\PSL_2\left(\bbQ_p\right)$ for some prime $p$, and denote by $T=T_{p+1}$ the associated Bruhat--Tits tree, with $\partial T$ its boundary. Occasionally we consider $\PSL_2\left(K\right)$ over a general nonarchimedean local field $K$.
We denote by $\Ell$, $\Hyp$ the sets of elliptic and hyperbolic elements of $G$, respectively. For $g\in \Ell$ let $\Fix(g)$ denote the subtree of fixed points. 

Actions on trees can be classified into four types by their invariant sets in the tree and its boundary, as follows:

\begin{definition} \label{def:action_classification} Let $\Gamma < \Aut^{0}\left(T\right)$ be finitely generated. Then exactly one of the following holds:
    \begin{enumerate}
        \item Every $\gamma \in \Gamma$ is elliptic, and $\Gamma$ fixes a point $v\in T$, in which case we say that $\Gamma$ is elliptic;
        \item There is some $\gamma \in \Gamma$ which is hyperbolic, and a pair of points $\left\{x,y\right\} \subseteq \partial T$ invariant under the action of $\Gamma$ (possibly but not necessarily fixed pointwise), in which case we say that $\Gamma$ is lineal;
        \item There is some $\gamma \in \Gamma$ which is hyperbolic, and a unique point $x \in \partial T$ which is fixed by $\Gamma$, in which case we say that $\Gamma$ is focal;
        \item There is no point in $T$ or in $\partial T$ which is fixed by $\Gamma$, and no pair of points in $\partial T$ which is invariant under $\Gamma$, in which case we say that $\Gamma$ is irreducible.
    \end{enumerate}
\end{definition}
\begin{remark} \label{rem:limit_set}
    In the irreducible case, the limit set $L = \overline{\{\gamma_+ \ | \ \gamma \in \Gamma \cap \Hyp\}} \subset \partial T$ is a perfect compact $\Gamma$-invariant subset of $\partial T$ (where $\gamma_+$ denotes the attracting point of $\gamma$). The set of pairs $\{(\gamma_+,\gamma_-) \ | \ \gamma \in \Gamma \cap \Hyp\}$ is dense in $L \times L$. In fact, given two hyperbolic elements $\eta,\theta \in \Gamma$, for a large enough value of $n$ the element $\gamma_n = \eta^n g \theta^n$ will be hyperbolic with $(\gamma_n)_- \rightarrow \theta_-$ and $(\gamma_n)_+ \rightarrow \eta_+$ as $n \rightarrow \infty$, whenever $g \in \Gamma$ is chosen such that $g \theta_+ \ne \eta_-$.
\end{remark}

In the case of subgroups of $\PSL_2\left(K\right)$ acting on its Bruhat--Tits tree, we can relate these geometric properties of the action with the intrinsic structure of the group:

\begin{lemma}[{\cite[Lemma 6.6]{CoSc}}] \label{lem:nonelementary_zariski_dense} 
    Let $\Gamma < G = \PSL_2\left(K\right)$ be finitely generated. Then $\Gamma$ is irreducible $\iff$ $\Gamma$ is Zariski-dense and non-precompact.
\end{lemma}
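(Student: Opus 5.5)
We prove the two implications separately, using the classification of Definition~\ref{def:action_classification} and the algebraic structure of the non-irreducible cases. Throughout, identify $G=\PSL_2(K)$ with the $K$-points of the adjoint group of $\operatorname{SL}_2$, acting on $T$ with vertex stabilizers the maximal compact subgroups, conjugate to $\PSL_2(\mathcal{O})$ and its Iwahori-type relatives.

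\textbf{Irreducible $\Rightarrow$ Zariski-dense and non-precompact.} Since $\Gamma$ is irreducible it contains a hyperbolic element, which has unbounded powers, so $\Gamma$ is not precompact. For Zariski density, let $H=\overline{\Gamma}^{Z}$ and suppose $H\neq \PSL_2$. The proper Zariski-closed subgroups of $\PSL_2$ over $K$ fall into three classes. First, $H$ may be contained in a Borel subgroup; then $\Gamma$ fixes a point of $\mathbb{P}^1(K)=\partial T$, contradicting irreducibility. Second, the identity component $H^0$ may be a torus $S$; then $H\le N(S)$, which preserves the pair of fixed points of $S$ on $\mathbb{P}^1(\overline{K})$. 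If $S$ is split these points lie in $\partial T$ and $\Gamma$ preserves a pair, again a contradiction. If $S$ is anisotropic, $S(K)$ is compact, so $N(S)(K)$ is compact (up to finite index) and fixes a point of $T$, again a contradiction. Third, $H$ may be finite, hence bounded, so $\Gamma$ fixes a vertex. A unipotent $H^0$ falls into the Borel case. Hence $H=\PSL_2$.

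\textbf{Zariski-dense and non-precompact $\Rightarrow$ irreducible.} We rule out each of the other three types in Definition~\ref{def:action_classification}.
\begin{enumerate}
\item If $\Gamma$ is elliptic, it fixes a vertex, so it lies in a compact stabilizer, which contradicts non-precompactness.
\item If $\Gamma$ is focal, it fixes $\xi\in\partial T=\mathbb{P}^1(K)$, so it lies in the stabilizer of $\xi$. This stabilizer is a Borel subgroup, which is Zariski closed and proper, a contradiction.
\item If $\Gamma$ is lineal, it preserves $\{x,y\}\subset\mathbb{P}^1(K)$, so it lies in the normalizer of the split torus fixing $x$ and $y$. This normalizer is a proper algebraic subgroup, a contradiction.
\end{enumerate}
Only the irreducible case remains.

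The main obstacle is the list of proper algebraic subgroups over a nonarchimedean $K$, in particular the anisotropic torus case. There one must check that the normalizer of a non-split torus has bounded $K$-points, and hence fixes a vertex or the midpoint of an edge. Since $G$ acts without inversions, a group fixing the midpoint of an edge fixes that edge, and hence a vertex. To establish boundedness we would use that $S(K)\cong \ker(N_{L/K})/\{\pm1\}$ for a quadratic extension $L/K$, and that this kernel is compact.
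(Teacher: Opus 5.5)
The paper does not prove this lemma; it is quoted from Conder--Schillewaert \cite{CoSc}. Your argument is a correct, self-contained proof by the standard route. You match the proper algebraic subgroups of $\PSL_2$ (finite; $H^0$ unipotent or Borel; $H^0$ a torus) against the elliptic, focal and lineal cases of Definition~\ref{def:action_classification}. What this buys over the citation is that it shows this step holds over every nonarchimedean local field, unlike Lemma~\ref{lem:zariski_trichotomy}.

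Two points should be tightened.

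First, $\PSL_2(K)$ is not the group $\mathrm{PGL}_2(K)$ of $K$-points of the adjoint group; it is a finite-index subgroup of it. This matters because $\mathrm{PGL}_2(K)$ contains inversions, so your midpoint-to-vertex step is valid only because $\Gamma\le\PSL_2(K)$. Likewise, $\ker(N_{L/K})/\{\pm1\}$ is $S(K)\cap\PSL_2(K)$ rather than $S(K)$, though that is all you need.

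Second, the case ``$H$ lies in a Borel subgroup'' must be read as ``$H^0$ is unipotent or Borel''. You then need to check that the unique fixed point of $H^0$ in $\mathbb{P}^1(\overline K)$ is $K$-rational. Read literally, your case overlaps the anisotropic-torus case: there $H$ lies in Borel subgroups not defined over $K$, and the conclusion that $\Gamma$ fixes a point of $\partial T$ fails. Rationality can be verified in either of two ways:
\begin{itemize}
\item In characteristic $0$, Galois invariance suffices, because $H$ is the closure of a set of $K$-points and hence is defined over $K$.
\item In general, $\Gamma\cap H^0$ is Zariski-dense in $H^0$, so it contains a nontrivial unipotent element (a nontrivial commutator if $H^0$ is Borel). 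That element's unique fixed point in $\mathbb{P}^1(\overline K)$ is $K$-rational.
\end{itemize}

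You can also bypass this rationality bookkeeping in the first direction. An irreducible $\Gamma$ contains two hyperbolic elements with disjoint endpoint pairs (cf.\ Remark~\ref{rem:limit_set}), and their large powers generate a free group by ping-pong. Every proper algebraic subgroup of $\PSL_2$ is virtually solvable, so it cannot contain such a free group, and $\Gamma$ must be Zariski-dense.
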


\begin{proposition}[{\cite[Proposition 3.4 and Corollary 3.5]{CoSc}}] \label{prop:finite_order_fixed_sets}
    Let $a\in \PSL_2\left(\bbQ_p\right)$ be a nontrivial element of finite order $n$, and suppose that $p \ge 5$. Then:
    \begin{itemize}
        \item $\Fix\left(a\right)$ either consists of a single vertex, or is a bi-infinite geodesic;
        \item $\Fix\left(a^k\right) = \Fix\left(a\right)$ for every $1 \le k < n$.
    \end{itemize}
\end{proposition}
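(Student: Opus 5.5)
The plan is to use the algebraic structure of a finite-order element: diagonalize a lift of $a$ to $\operatorname{SL}_2$ over a small extension of $\bbQ_p$, then read off $\Fix(a)$ locally by reducing modulo $p$.

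\emph{Reduction to a tame, semisimple element.} Lift $a$ to $A \in \operatorname{SL}_2(\bbQ_p)$. Then $A^n = \pm 1$, so $A^{2n}=1$. Hence $A$ is semisimple with eigenvalues $\lambda, \lambda^{-1}$, where $\lambda$ is a root of unity in an extension $E=\bbQ_p(\lambda)$ with $[E:\bbQ_p]\le 2$. Since $a \neq 1$ in $\PSL_2$, we have $\lambda \neq \pm 1$, i.e. $\lambda^2 \neq 1$.

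If $p$ divided the order of $\lambda$, then $E$ would contain $\zeta_p$, which has degree $p-1>2$ because $p \ge 5$. This is impossible, so $\lambda$ has order prime to $p$. Consequently $E/\bbQ_p$ is unramified: either $E=\bbQ_p$ (split case) or $E$ is the unramified quadratic extension (nonsplit case). Moreover, reduction modulo the maximal ideal is injective on prime-to-$p$ roots of unity, so $\bar\lambda^2 \neq 1$ in the residue field.

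\emph{Split case.} Conjugate so that $A=\operatorname{diag}(\lambda,\lambda^{-1})$ with $\lambda \in \bbZ_p^\times$. Then $a$ fixes every vertex $[\bbZ_p p^i \oplus \bbZ_p p^j]$ of the standard apartment, which is the geodesic between $0$ and $\infty$.

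To show nothing else is fixed, recall that $\Fix(a)$ is a convex subtree. It therefore suffices to show that no vertex $v$ of the apartment has a fixed neighbour off the apartment. The neighbours of $v=[L]$ correspond to lines in $L/pL \cong \bbF_p^2$, and $a$ acts on these lines via $\operatorname{diag}(\bar\lambda,\bar\lambda^{-1})$. Since $\bar\lambda \ne \bar\lambda^{-1}$, the only invariant lines are the two coordinate lines, and these correspond exactly to the two neighbours of $v$ on the apartment. So $\Fix(a)$ is the bi-infinite geodesic.

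\emph{Nonsplit case.} The group $\langle a\rangle$ is finite, hence bounded, hence fixes a vertex $v=[L]$. The action of $a$ on $L/pL$ has characteristic polynomial with roots $\bar\lambda^{\pm1} \in \bbF_{p^2}\setminus \bbF_p$. These roots are not in $\bbF_p$ because the reduction of the irreducible unramified quadratic polynomial stays irreducible. Hence $a$ fixes no line of $\bbF_p^2$, so no neighbour of $v$ is fixed. By convexity, $\Fix(a)=\{v\}$.

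\emph{Powers.} Let $1\le k<n$. The image of $a^k$ in $\PSL_2$ is nontrivial, so $\lambda^{2k}\neq 1$. Thus $A^k$ has the distinct eigenvalues $\lambda^{\pm k}$ and the same eigenlines as $A$. If $a$ is split, these eigenlines are $\bbQ_p$-rational, and the split-case argument applied to $a^k$ gives the same apartment, so $\Fix(a^k)=\Fix(a)$. If $a$ is nonsplit, the eigenlines are not $\bbQ_p$-rational. Therefore $a^k$ cannot be split, since then it would be diagonalizable over $\bbQ_p$ with those same eigenlines. So $\Fix(a^k)$ is a single vertex, and it must equal $v$ because $\Fix(a)\subseteq \Fix(a^k)$.

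The step I expect to need most care is the local computation at vertices. One must identify the neighbours of $[L]$ with $\mathbb{P}^1(L/pL)$ compatibly with the action, and justify that reduction preserves $\bar\lambda\neq\bar\lambda^{-1}$ (and non-rationality in the nonsplit case). This is exactly where $p\ge 5$ is used: it rules out $p$-torsion and ramified tori, which would otherwise produce fixed sets that are thick neighbourhoods of a geodesic or of a vertex.
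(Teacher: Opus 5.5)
Your proof is correct. The paper gives no argument of its own for this statement; it is quoted from \cite[Proposition 3.4 and Corollary 3.5]{CoSc}. So the real comparison is between citing and proving. Citing buys brevity, and access to the treatment of general local fields in \cite{CoSc}. Your route buys a short self-contained verification in which the role of each hypothesis is visible. The assumption $p\ge5$ is used only to force the eigenvalue $\lambda$ of a lift $A\in\operatorname{SL}_2(\bbQ_p)$ to have order prime to $p$, which is the same fact as Remark~\ref{rem:qp_orders}. Everything else is linear algebra on the link $\mathbb{P}^1(L/pL)$ of a fixed vertex, together with convexity of $\Fix(a)$. In particular, replacing $p$ by a uniformizer and $\bbF_p$ by the residue field, the same argument works over any nonarchimedean local field whose residue characteristic does not divide the order of $a$. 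The compatibility you flag is routine: $\det A=1$ forces $AL=cL$ with $c$ a unit. Hence $A$ stabilizes $L$ and permutes the index-$p$ lattices between $pL$ and $L$.

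Two small remarks. First, your lifting step uses the convention $\PSL_2=\operatorname{SL}_2/\{\pm1\}$. This is the paper's convention (its groups act inside $\Aut^0(T)$), and it is genuinely needed. In $\operatorname{PGL}_2(\bbQ_p)$ the involution $\left(\begin{smallmatrix}0&1\\ p&0\end{smallmatrix}\right)$ inverts an edge and fixes no vertex. Second, in the nonsplit case, the principle ``the reduction of an irreducible unramified quadratic stays irreducible'' needs separability of the reduction. It fails for generators such as $1+p\sqrt{u}$ with $u$ a nonsquare unit. You do have separability: $x^2-tx+1$ reduces to a polynomial with the distinct roots $\bar\lambda^{\pm1}$. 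So by Hensel's lemma, a root in $\bbF_p$ would lift to a root in $\bbQ_p$, contradicting $\lambda\notin\bbQ_p$.
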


\begin{lemma}[{\cite[Lemma 6.7]{CoSc}}] \label{lem:zariski_trichotomy}
    Let $\Gamma<\PSL_2\left(\bbQ_p\right)$ be Zariski-dense, nondiscrete and non-precompact. Then $\Gamma$ is dense.
\end{lemma}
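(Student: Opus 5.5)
This lemma is quoted from Cornulier--Schapira, so the plan reconstructs its proof from the classification of closed subgroups of $\PSL_2(\bbQ_p)$. Let $H=\overline{\Gamma}$, a closed subgroup of the $p$-adic Lie group $G$. By Cartan's closed subgroup theorem $H$ is a $p$-adic Lie subgroup, with Lie algebra $\mathfrak h\subseteq\mathfrak{sl}_2(\bbQ_p)$. Since $\Gamma$ is nondiscrete, $H$ is nondiscrete, and so $\mathfrak h\neq 0$. The aim is to show $\mathfrak h=\mathfrak{sl}_2(\bbQ_p)$, then to show that $H$ is open and finally equal to $G$.

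\emph{Step 1: $\mathfrak h$ is $\operatorname{Ad}(\Gamma)$-invariant, hence everything.} Since $\Gamma\subseteq H$ normalizes $H$, it preserves $\mathfrak h$ under the adjoint action. The stabilizer in $G$ of a subspace of $\mathfrak{sl}_2$ is Zariski closed. Because $\Gamma$ is Zariski dense, all of $G$ (and its Zariski closure) preserves $\mathfrak h$. The adjoint representation of $\mathfrak{sl}_2$ is irreducible in characteristic $0$, so $\mathfrak h=\mathfrak{sl}_2(\bbQ_p)$.

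\emph{Step 2: $H$ is open.} The group $H$ contains an open neighborhood of the identity, given by the exponential image of a small lattice in $\mathfrak h=\mathfrak{sl}_2$. Hence $H$ is an open subgroup of $G$.

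\emph{Step 3: open and non-compact forces $H=G$.} This is the main obstacle. I would use the tree $T$. An open subgroup $H$ contains a compact open subgroup $U$, for instance a principal congruence subgroup. Since $\Gamma$ is non-precompact, $H$ is noncompact, so it is not contained in a vertex stabilizer. There are two cases.
\begin{itemize}
\item If $H$ contains a hyperbolic element, use Zariski density to get two hyperbolic elements of $\Gamma$ with disjoint fixed ends. By Lemma~\ref{lem:nonelementary_zariski_dense}, $\Gamma$ is irreducible, so Remark~\ref{rem:limit_set} supplies many such pairs.
\item Otherwise, use the fact that every unbounded subgroup of $\PSL_2(\bbQ_p)$ contains hyperbolics, or else fixes an end.
\end{itemize}
Conjugating $U$ by powers of a hyperbolic element $h$, the subgroup $H$ contains $h^kUh^{-k}$ for all $k$. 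These contain the full unipotent subgroup $N_+$ fixing the attracting end $h_+$, since conjugation expands $N_+\cap U$ to all of $N_+$. By the same argument with $h^{-1}$, $H$ also contains $N_-$. Choosing hyperbolics whose axes have distinct endpoints gives two opposite unipotent subgroups. Two opposite unipotent subgroups generate $\SL_2$-image, so $N_+$ and $N_-$ together generate $\PSL_2(\bbQ_p)$, because $\PSL_2(\bbQ_p)$ is generated by its unipotent elements. Hence $H=G$.

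An alternative for Step 3 is to quote Tits' simplicity: an open subgroup meeting every conjugate unboundedly is all of $G$. The delicate point is the unipotent expansion, namely checking that $h^kUh^{-k}\cap N_+$ exhausts $N_+$. This is a direct computation with $h=\mathrm{diag}(p,p^{-1})$ after conjugating.
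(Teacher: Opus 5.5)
The paper gives no proof of this lemma. It is imported from \cite[Lemma 6.7]{CoSc}, and the only in-paper commentary is Remark~\ref{rem:subfields}, which says $K=\bbQ_p$ is needed to exclude closures such as $\PSL_2(L)$ for a closed subfield $L<K$. So there is no in-paper route to compare against. Your proposal is a correct, self-contained reconstruction along standard lines:
\begin{itemize}
\item $H=\overline{\Gamma}$ is a $p$-adic Lie group with $\mathfrak{h}\neq 0$;
\item $\mathfrak{h}$ is an $\operatorname{Ad}(\Gamma)$-invariant $\bbQ_p$-subspace whose stabilizer is Zariski closed, so $\mathfrak{h}=\mathfrak{sl}_2(\bbQ_p)$ and $H$ is open;
\item an open non-compact subgroup is everything.
\end{itemize}
You should say explicitly that Step 1, together with the closed-subgroup theorem (a characteristic-zero tool), is exactly where $\bbQ_p$ enters, matching Remark~\ref{rem:subfields}. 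For $\PSL_2(\bbQ_p)<\PSL_2(K)$ with $K/\bbQ_p$ finite, the Lie algebra of the closure is $\mathfrak{sl}_2(\bbQ_p)$. This is not a $K$-subspace, so $K$-Zariski density says nothing about its stabilizer.

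Step 3 is right but over-engineered, and one ingredient is misapplied. Lemma~\ref{lem:nonelementary_zariski_dense} and Remark~\ref{rem:limit_set} are stated for finitely generated groups, while $\Gamma$ here is arbitrary. Fortunately you do not need them: a single hyperbolic $h\in H$ suffices. Conjugate so that $h$ is the image of $\mathrm{diag}(\lambda,\lambda^{-1})$ with $|\lambda|>1$; this is possible since its two fixed ends are $\bbQ_p$-points of $\partial T=\mathbb{P}^1(\bbQ_p)$. Then $h^k u(x)h^{-k}=u(\lambda^{2k}x)$ for upper unipotents $u(x)$, so $\bigcup_{k\ge 0}h^kUh^{-k}\supseteq N_+$, and symmetrically $\bigcup_{k\le 0}h^kUh^{-k}\supseteq N_-$. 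These two subgroups are already opposite, since $h_+\neq h_-$, and together they generate the image of $\mathrm{SL}_2(\bbQ_p)$.

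For the existence of $h$, close your ``otherwise'' branch explicitly. $H$ is closed and non-compact, so it fixes no vertex. A group acting without inversions with no hyperbolic element and no fixed vertex fixes an end. That would put $\Gamma$ in a Borel subgroup, contradicting Zariski density. Alternatively, $U\subseteq H$ contains nontrivial upper and lower unipotents, so $H$ fixes no end.

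Your ``alternative via Tits'' is misphrased. The relevant fact is that every proper open subgroup of $\mathrm{SL}_2(\bbQ_p)$ is compact, which is precisely what your unipotent expansion proves by hand.
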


\begin{remark} \label{rem:subfields}
Of course, many of the results in~\cite{CoSc} are stated for $\PSL_2(K)$, where $K$ is a general local field. However, we rely heavily on the specific stated versions of the last two lemmas which hold only for $K = \bbQ_p$. 
For example, in the last lemma, if $L<K$ is a local subfield, one has to also allow for examples like $\PSL_2(L) < \PSL_2(K)$, which can be disregarded if $K = \bbQ_p$. We refer to~\cite[Proposition 8.4]{Sh} for a discussion of the more general case. 
\end{remark}

We recall also the well-known lemmas:
\begin{lemma}[Tits' Lemma, {\cite[I.6 Proposition 26]{Se}}] \label{lem:tits_lemma}
    Let $a,b\in \Aut^0\left(T\right)$ be elliptic elements such that $\Fix\left(a\right) \cap \Fix\left(b\right) = \emptyset$. Then $ab$ is hyperbolic.
\end{lemma}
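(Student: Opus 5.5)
The statement just above is Tits' Lemma: if $a,b\in\Aut^0(T)$ are elliptic with $\Fix(a)\cap\Fix(b)=\emptyset$, then $ab$ is hyperbolic. The plan is to exhibit an explicit segment on which $ab$ acts as a translation of positive length. This forces $ab$ to be hyperbolic, since an automorphism without edge inversions either fixes a vertex or translates along an axis.

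First I will set up the bridge between the fixed trees. Since $\Fix(a)$ and $\Fix(b)$ are disjoint nonempty subtrees, there is a unique shortest geodesic segment $[x,y]$ with $x\in\Fix(a)$, $y\in\Fix(b)$, and $d=d(x,y)\ge 1$. Its interior meets neither fixed set. The key local observation is the following. Let $e$ be the first edge of $[x,y]$ at $x$. Since $e$ is not in $\Fix(a)$ but $x$ is fixed, $a$ maps $e$ to a different edge $ae$ at $x$. The same holds for $b$ at $y$ with the last edge $e'$ of $[x,y]$.

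Next I will compute the displacement. Consider the point $z=b^{-1}x$; to avoid inverse bookkeeping it is equivalent, and cleaner, to look at the vertex $v=x$ and its image $abx$.

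We have $bx$ at distance $d$ from $y$, and the path $[x,y]\cup[y,bx]$ has no backtracking at $y$ because $be'\ne e'$. So $d(x,bx)=2d$, with $y$ the midpoint.

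Applying $a$, which fixes $x$, gives $d(x,abx)=2d$. To see that $ab$ actually translates, I will check that the concatenation
\[
[b^{-1}x,\,x]\cup[x,\,abx]
\]
is a geodesic. This reduces to showing that the two edges at $x$ are distinct. The edge of $[x,b^{-1}x]$ at $x$ is $e$, since $b^{-1}x$ is reached through $y$. The edge of $[x,abx]=a[x,bx]$ at $x$ is $ae$. These differ, as noted above.

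Finally I will conclude. The segment from $b^{-1}x=(ab)^{-1}(ax)=(ab)^{-1}x$ through $x$ to $(ab)x$ is geodesic of length $4d$, with $x$ the midpoint. An automorphism $g$ for which $[g^{-1}x,x]\cup[x,gx]$ is geodesic with $d(x,gx)>0$ is hyperbolic. This is standard: if $g$ fixed a vertex $w$, or were elliptic, then $x$ would be displaced symmetrically about its projection to $\Fix(g)$, which would force backtracking at $x$. Applied with $g=ab$, this shows $ab$ is hyperbolic with translation length $2d$. The hypothesis that elements lie in $\Aut^0(T)$ (no inversions) is what rules out the remaining non-hyperbolic possibility.

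The main obstacle is the careful verification of non-backtracking at $x$, specifically identifying the initial edge of $[x,b^{-1}x]$ as $e$. I would handle this by noting that $b^{-1}x$ lies in the component of $T\setminus\{y\}$ reached via $b^{-1}e'\ne e'$, hence the geodesic from $x$ passes through $y$ and starts with $e$.
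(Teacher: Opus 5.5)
Your proof is correct. The paper gives no argument of its own and only cites Serre; your route (the bridge $[x,y]$ between $\Fix(a)$ and $\Fix(b)$, non-backtracking at $y$ and at $x$ because $b$ and $a$ move the end edges of the bridge, then the criterion that $x$ is the midpoint of $[(ab)^{-1}x,(ab)x]$ with $d(x,abx)=2d>0$) is essentially the standard proof given there, and it also yields that $ab$ has translation length $2d(\Fix(a),\Fix(b))$ with axis containing $[x,y]$.
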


\begin{proposition}[Helly's Theorem] \label{prop:helly}
    Let $T_1,\dots,T_n$ be subtrees of $T$ such that $T_i \cap T_j \neq \emptyset$ for all $1\le i,j\le n$. Then $\bigcap_{1\le i \le n}T_i \neq \emptyset$.
\end{proposition}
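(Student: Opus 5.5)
The plan is to prove the statement by induction on $n$, with the case $n=3$ carrying all the geometric content. The cases $n\le 2$ are trivial: for $n=2$ the conclusion is exactly the hypothesis. Throughout I use only two facts. First, a subtree is connected, hence geodesically convex: if $x,y\in T_i$ then the unique geodesic $[x,y]$ lies in $T_i$. Second, the intersection of two subtrees, if nonempty, is again a subtree.

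\emph{The case $n=3$.} Choose points $x_{12}\in T_1\cap T_2$, $x_{13}\in T_1\cap T_3$ and $x_{23}\in T_2\cap T_3$. In a tree, the three geodesics $[x_{12},x_{13}]$, $[x_{12},x_{23}]$ and $[x_{13},x_{23}]$ form a tripod, possibly degenerate. Hence there is a point $m$, the median or center, lying on all three geodesics. Concretely, $m$ is the point where the geodesic from $x_{23}$ to $[x_{12},x_{13}]$ first meets that segment. If the $x$'s are vertices, then $m$ is a vertex as well.

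Now apply convexity to each subtree:
\begin{itemize}
\item both endpoints of $[x_{12},x_{13}]$ lie in $T_1$, so $m\in T_1$;
\item both endpoints of $[x_{12},x_{23}]$ lie in $T_2$, so $m\in T_2$;
\item both endpoints of $[x_{13},x_{23}]$ lie in $T_3$, so $m\in T_3$.
\end{itemize}
Therefore $m\in T_1\cap T_2\cap T_3$.

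\emph{Inductive step, $n\ge 4$.} Set $T' = T_{n-1}\cap T_n$, which is a nonempty subtree by hypothesis. Consider the family $T_1,\dots,T_{n-2},T'$ of $n-1$ subtrees.
\begin{itemize}
\item For $i,j\le n-2$, the pairs $T_i,T_j$ intersect by hypothesis.
\item For $i\le n-2$, the triple $T_i,T_{n-1},T_n$ is pairwise intersecting, so the case $n=3$ gives $T_i\cap T'\neq\emptyset$.
\end{itemize}
So the new family is pairwise intersecting. The induction hypothesis then gives $\bigcap_{i\le n-2}T_i\cap T'\neq\emptyset$, and this set equals $\bigcap_{1\le i\le n}T_i$.

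The only step requiring care is the existence of the median point $m$ in the three-point case. This follows from uniqueness of geodesics in a tree: the geodesics $[x_{12},x_{13}]$ and $[x_{12},x_{23}]$ share an initial segment starting at $x_{12}$ and then diverge at a point $m$. The concatenation of the remaining pieces $[m,x_{13}]$ and $[m,x_{23}]$ has no backtracking, so it is the geodesic $[x_{13},x_{23}]$, which therefore also passes through $m$.
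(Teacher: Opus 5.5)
The paper does not prove this statement; it recalls Helly's theorem as a well-known fact, alongside Tits' lemma, so there is no argument of its own to compare with.

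Your proof is correct and is the standard one. The median of $x_{12},x_{13},x_{23}$ lies on all three geodesics, so by convexity it lies in each $T_i$. The induction then replaces $T_{n-1},T_n$ by $T_{n-1}\cap T_n$, using the three-set case to show the new family still meets pairwise.

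Your remark that the median of three vertices is a vertex matters for how the paper uses the result. There the $T_i$ are fixed-point sets of elliptic subgroups of $\Aut^0(T)$, which are subcomplexes because there are no inversions, and the conclusion needed is a common fixed vertex.
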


\section{Around Weidmann's theorem}
\label{sec:weidmann}
Many of the early, non-topological, proofs for the Nielsen--Schreier theorem are surprisingly delicate. Nielsen's original proof, which relies on his namesake method (\cite{Ni1},~\cite{Ni2}), works (only for) a finitely generated subgroup $\left< x_1, \ldots, x_n \right>$. Performing Nielsen moves on the given set of generators, as in the general strategy outlined in the introduction, he shows that it is always possible to reach a ``minimal'' generating set of the form $\left< y_1, \ldots, y_d,1,\ldots,1 \right>$, consisting of a free generating set and trivial elements. This method was later interpreted geometrically, using the action of the free group on its Cayley tree, and the geometric approach was generalized in many papers (see~\cite{Zi},~\cite{CoZi},~\cite{PeRe},~\cite{We} and the references there). 

We will rely on the latter paper, in which  Weidmann treats more general groups acting, without inversions, on a tree $T$. Thus consider a group $G = \left< \mathcal M \right> < \Aut^{0}(T)$ generated by a partitioned finite set
\[
\mathcal M=(S_1,\ldots,S_\ell;H), \qquad {\text{setting }} U_i=\left< S_i\right>.\]
Weidmann allows $\mathcal M$ to be modified only by using special Nielsen moves, which respect the partition structure. Namely:
\begin{itemize}
\item Either conjugate
an entire block $S_i$ by an element of the subgroup generated by the complementary
entries,
\item or replace an entry $h\in H$ by $a h b$, where
$a,b$ are in the subgroup generated by the remaining entries. 
\end{itemize}
An important role in Weidmann's theory is played by the trees $T_i$. $T_i$ is defined as the minimal $U_i$-invariant subtree containing all the fixed points of all nontrivial elements of $U_i$. We will work under the very special assumption that these trees degenerate to one single point, and in particular that the groups $U_i$ are elliptic. Here is the special case of Weidmann's theorem that we will use
\begin{theorem}[{\cite[Theorem 7]{We}}] \label{thm:weid}
Let $\mathcal M=(S_1,\ldots,S_\ell;H)$ be a partitioned set of generators for $G < \Aut^{0}(T)$. Assume that each $U_i = \left< S_i \right>$ is an elliptic subgroup such that 
\[T_i := \bigcup_{1 \ne u \in U_i} \Fix(u) = \{x_i\}, \qquad {\text{is a single vertex.}}\] 
Then, after
finitely many of the preceding moves, one obtains an equivalent
partitioned set 
\(\mathcal M'=(S'_1,\ldots,S'_\ell;H')\), with corresponding data $U'_i = \left< S'_i \right>$, $T'_i = \{x'_i\}$, for which one of the following holds:
\begin{enumerate}
    \item \label{w:free_prod}
    \(G = U'_1*\cdots *U'_\ell*F(H')\) and 
    moreover, \(G_{x'_i}=U'_i \ \forall i\),
    \item \label{w:new_elliptic} Some entry belonging to $H'$ is elliptic,
    \item \label{w:combine:elliptics} $x'_i=x'_j$ for some $i\neq j$.
\end{enumerate}
\end{theorem}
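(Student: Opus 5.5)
The plan is to reprove this special case by the usual combination of Bass--Serre theory and folding, in the spirit of Weidmann's geometric Nielsen method. Let $\hat G = U_1 * \cdots * U_\ell * F(H)$ be the abstract free product, with $F(H)$ free on the entries of $H$, and let $\pi:\hat G\to G$ be the natural surjection. Realize $\hat G$ as the fundamental group of a graph of groups $\mathbb A$ with the following pieces:
\begin{itemize}
\item a vertex $v_i$ with group $U_i$ for each $i$;
\item a vertex $v_0$ with trivial group, joined to each $v_i$ by an edge;
\item a loop at $v_0$ for each $h\in H$;
\item trivial edge groups throughout.
\end{itemize}
Let $X$ be its Bass--Serre tree. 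Choose a base vertex $y\in T$ as the image of a lift of $v_0$, send the lift of $v_i$ to $x_i$, and send the lift of the loop of $h$ to the geodesic $[y,hy]$. Extend $\pi$-equivariantly, mapping edges linearly onto geodesics, to get $\phi: X\to T$. The hypothesis $T_i=\{x_i\}$ guarantees that $U_i$ fixes $x_i$, so this assignment is consistent. Measure the complexity of $\mathcal M$ (together with the choice of $y$) by
\[
c(\mathcal M,y)=\sum_i d(y,x_i)+\sum_{h\in H} d(y,hy).
\]
Since $c(\mathcal M,y)\in\mathbb Z_{\ge 0}$, strict decrease under moves will force termination.

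Next I show that if $\phi$ is locally injective (an immersion) at every vertex, then conclusion~(\ref{w:free_prod}) holds. An equivariant immersion of trees is injective, so $\ker\pi$ acts freely on $X$ and preserves the fibres of $\phi$. Injectivity therefore gives $\pi$ injective, i.e.\ $G=U_1*\cdots*U_\ell*F(H)$. Any element of $G_{x_i}$ fixes the unique preimage of $x_i$, so it lies in the vertex stabilizer $U_i$, giving $G_{x_i}=U_i$. If $\phi$ is not an immersion, there is a vertex $\tilde v$ of $X$ and two edges at $\tilde v$ whose image geodesics share an initial segment. Up to translating by $\hat G$, I do a case analysis on $\tilde v$:
\begin{itemize}
\item If $\tilde v$ lies over $v_0$, the two edges correspond to entries $h$, $h'$ (or $h^{-1}$), or to some $x_i$. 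Moving $y$ along the common initial segment, or replacing $h$ by $h'^{\pm1}h$ or $hh'^{\pm1}$, strictly reduces $c$. If the overlap swallows an entire segment, the new entry fixes a point and we are in case~(\ref{w:new_elliptic}).
\item If $\tilde v$ lies over $v_i$, the two edges are $e$ and $ue$ with $1\ne u\in U_i$, or $e$ and an edge toward another vertex. In the first subcase, $u$ maps an initial piece of $\phi(e)$ into itself through $x_i$. Since $\Fix(u)=\{x_i\}$, $u$ fixes no edge at $x_i$, so two edges $e$ and $ue$ at $x_i$ with equal images cannot occur. Hence the genuine folds at $v_i$ are between edges in different $U_i$-orbits. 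These are removed by conjugating a block $S_j$, or replacing an $h$ by $uh$, or $hu$, with $u\in U_i$; this shortens $d(y,x_j)$ or $d(y,hy)$. When a segment is swallowed completely we get $x'_i=x'_j$ (case~(\ref{w:combine:elliptics})) or an elliptic entry (case~(\ref{w:new_elliptic})).
\end{itemize}
Every reduction used is one of the permitted moves: block conjugation, or $h\mapsto ahb$ with $a,b$ in the subgroup generated by the remaining entries. Because $c$ strictly decreases, after finitely many steps either one of~(\ref{w:new_elliptic}), (\ref{w:combine:elliptics}) occurs or $\phi$ becomes an immersion.

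I expect the main obstacle to be the local analysis at the vertices $\tilde v$ over $v_i$. One must check that every failure of local injectivity there can be undone by an allowed move that strictly lowers $c$, rather than just moving the overlap elsewhere. This is exactly where the hypothesis $\bigcup_{u\ne1}\Fix(u)=\{x_i\}$ is used: nontrivial elements of $U_i$ act freely on the edges at $x_i$, so the only folds at $x_i$ are between distinct orbits. Without the hypothesis one would need Weidmann's full machinery with the trees $T_i$. The other delicate point is bookkeeping: showing that moving $y$, or rechoosing the lifts of the $x_i$ after a conjugation, never increases any other term of $c$. I would handle this by always moving $y$ only along a segment shared by at least two terms, so the decrease of $2$ on those terms dominates the increase of at most $1$ on any term that is not shared.
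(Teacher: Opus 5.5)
\textbf{There is a genuine gap in the reduction step.} The paper does not reprove this statement. It passes to a minimal tuple via Weidmann's minimization lemma, then checks that when $T_i=\{x_i\}$ his three obstructions become $x_i=x_j$ or an elliptic entry of $H$. Your self-contained folding route is fine up to the immersion criterion: an immersion does force $\pi$ to be injective and $G_{x_i}=U_i$. The problem is the claim that if $\phi$ is not an immersion, some permitted move or relocation of $y$ strictly lowers $c$.

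The cause is that you freeze $\mathbb{A}$ as a star/rose around one trivial vertex $v_0$. An immersion at $\tilde v_0$ then requires all the geodesics $[y,x_i]$ and $[y,h^{\pm1}y]$ to leave $y$ in pairwise distinct directions, and this is often impossible. Take $\ell=0$ and $H=\{h_1,h_2\}$ hyperbolic, with disjoint axes $A_1,A_2$ at distance $D\ge 1$ and translation lengths $t_1,t_2$.
\begin{itemize}
\item We have $c(y)=t_1+t_2+2d(y,A_1)+2d(y,A_2)$.
\item At every $y$, either both $h_1^{\pm1}$-edges or both $h_2^{\pm1}$-edges start along the same edge of $T$.
\item Sliding $y$ along the bridge between the axes keeps $c$ constant.
\item No permitted move lowers $\min_y c$. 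Conjugating $h_1$ by $h_2^m$ leaves it unchanged, while $h_2^m h_1 h_2^k$ with $m+k\neq 0$ has translation length $t_1+|m+k|t_2+2D$.
\end{itemize}
In fact no Nielsen-equivalent tuple admits an immersion at all. If it did, $\phi(X)/G$ would be a subdivided two-petal rose, which is its own core, whereas the core must be the quotient of the minimal subtree of $G$, a barbell. Yet conclusion (1) already holds here, since $G=F(h_1,h_2)$, so your procedure stalls without proving anything.

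The same happens with $H=\emptyset$ and four finite $U_i$ where $x_1,x_2$ are adjacent to a vertex $z$, $x_3,x_4$ are adjacent to $z'$, and $d(z,z')$ is large. No point of $T$ separates all four $x_i$, although $G=U_1*\cdots*U_4$. These examples also refute your bookkeeping:
\begin{itemize}
\item several unshared terms can each grow by $1$;
\item a term $d(y,hy)$ can grow by $2$;
\item moving $y$ along the axis of $h$ does not shrink $d(y,hy)$ at all.
\end{itemize}

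Your local analysis at the $v_i$ is also misplaced. In your $\mathbb{A}$ each $v_i$ has only one incident edge, so every fold at a lift of $v_i$ is between $e$ and $ue$ with $1\neq u\in U_i$. The hypothesis excludes these, as you say. The subcase of folds between different $U_i$-orbits there does not exist, and all the difficulty sits at the lifts of $v_0$, which is exactly where the argument breaks.

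The missing idea is that the underlying graph of groups must be allowed to change. Use total edge length as the complexity and perform Stallings/Bestvina--Feighn folds. Folds that do not change the fundamental group need no Nielsen move at all, for instance folding the common initial segments of $[y,x_1]$ and $[y,x_2]$, or of $[y,h_1y]$ and $[y,h_1^{-1}y]$, into a new trivial vertex. Only the $\pi_1$-changing folds must be realized by permitted moves, and that is where (2) and (3) arise. Equivalently, you need a minimality condition weaker than immersion at a single base point, much as Nielsen-reduced sets in free groups allow cancellation, only less than half. This is what Weidmann's Lemma~9 and Theorem~7, as invoked in the paper, provide.
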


\begin{proof}
By Weidmann's minimization lemma (\cite[Lemma 9]{We}), the partitioned set is equivalent to
one satisfying his minimality condition. Since the associated tree of
$U_i$ is $T_i=\{x_i\}$, the three obstructions in Weidmann's theorem
simplify as follows.

The first obstruction is precisely $x_i=x_j$ for some $i\neq j$.
The second says that $hx_i=x_i$ for some $h\in H$, and hence that $h$
is elliptic. The third also says directly that an element of $H$ is
elliptic. If none of these occurs, Weidmann's theorem gives the stated
free product decomposition. The stabilizer assertion is the stronger
conclusion obtained in the proof of that theorem.
\end{proof}

\noindent \textit{An important remark is in order here.} Weidmann's theorem is usually used for locally infinite trees. In fact, the theorem degenerates in the locally finite case whenever $U_i$ is an infinite elliptic group. Indeed for any vertex $x_i$ fixed by the group and $R>0$, there is a finite index subgroup of $U_i$ fixing the ball $B_{T}(x_i,R)$ so that $T_i = T$, and the whole Theorem~\cite[Theorem 7]{We} trivializes. Locally finite trees become interesting here only if all the groups $U_i$ are finite, which is the case of interest to us. 

An additional assumption that is very special to our setting is that the trees $T_i$ reduce to a single point. This is exactly the reason behind our restrictive assumptions that 
$K = \bbQ_p$, $p \ge 5$ and that our group does not contain finite order elements with an infinite fixed point set. Under these assumptions it follows directly from Proposition~\ref{prop:finite_order_fixed_sets} that $T_i = \{x_i\}$ reduces to one single point whenever $U_i = \left< S_i \right>$ is a nontrivial finite group.

\section{Proofs}
\label{sec:proofs}

From Lemmas~\ref{lem:zariski_trichotomy} and~\ref{lem:nonelementary_zariski_dense} it follows that in order for a subgroup to be dense it suffices for it to be nondiscrete and irreducible. Thus we will show that the nondiscreteness and irreducibility of a dense subgroup $\Gamma < \PSL_2\left(\bbQ_p\right)$ can be successively detected in some free factor, each of them separately.  We begin by showing how any free factor can be upgraded, at the cost of some Nielsen transformations and one extra generator, to a free factor whose image acts irreducibly. 

\begin{proposition} \label{prop:irreducible_detection}
    Let $\mathcal{M} = \left(S_1, \dots ,S_\ell\right)$ be a finite partitioned set of generators for an irreducible subgroup $\Gamma < \Aut^0\left(T\right)$, $\ell \ge 3$. Then, after finitely many moves of the form $S_i \rightarrow gS_i$ for $1 \le i \le \ell$, $g\in S_j$, $j \neq i$, one obtains an equivalent partitioned set $\mathcal{M}' = \left(S'_1,\dots, S'_\ell\right)$ such that $S'_\ell = S_\ell$ and $\left<S'_i,S'_j\right>$ is irreducible for some $i\neq j$.
\end{proposition}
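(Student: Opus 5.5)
Throughout, write $U_i = \left<S_i\right>$; note that $\left<S'_i,S'_j\right>$ means the subgroup generated by the two blocks. The moves $S_i \to gS_i$ (left-multiplying every entry of the block $S_i$ by $g \in S_j$) are Nielsen moves. They preserve $\left<\mathcal M\right> = \Gamma$. Since $\Gamma$ is irreducible, it is in particular not elliptic, lineal or focal. I will split into cases according to the type, in the sense of Definition~\ref{def:action_classification}, of the pairs $\left<U_i,U_j\right>$ with $i,j<\ell$. Only these blocks are modified, so $S'_\ell=S_\ell$ automatically. The guiding principle is that a subgroup of $\Aut^0(T)$ generated by two hyperbolic elements whose four endpoints in $\partial T$ are pairwise distinct is irreducible. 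Indeed, it then has no fixed vertex, no fixed end, and no invariant pair of ends. So the plan is to manufacture, inside some $\left<S'_i,S'_j\right>$, two hyperbolic elements with distinct endpoints.

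\textbf{Step 1: producing hyperbolic elements.} First I will arrange that $U_1$ and $U_2$ each contain a hyperbolic element. If $U_1$ is elliptic, there are two sub-cases. If some $U_k$ has a fixed point set disjoint from that of $U_1$, Tits' Lemma~\ref{lem:tits_lemma} (applied to suitable elements, using Helly's Theorem~\ref{prop:helly} to pass from pairwise to global disjointness) shows that replacing some $s\in S_1$ by $gs$ with $g\in S_k$ creates a hyperbolic element. Otherwise all the blocks pairwise share fixed points. If they were all elliptic, Helly would give a global fixed point for $\Gamma$, contradicting irreducibility. So some block contains a hyperbolic element, and I will left-multiply $S_1$ by an element of that block to import hyperbolicity. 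Here I use $\ell\ge 3$ so that block $\ell$ can serve as a donor without itself being changed. I will treat $S_2$ in the same way.

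\textbf{Step 2: separating endpoints.} Take hyperbolic $a\in U_1$ and $b\in U_2$. If $\{a_\pm\}\cap\{b_\pm\}=\emptyset$, then $\left<U_1,U_2\right>$ is irreducible and we are done. Otherwise $\left<U_1,U_2\right>$ may be lineal or focal. In that case I use irreducibility of $\Gamma$: some generator $g$ in some block moves the common end (or ends). By Remark~\ref{rem:limit_set}, the set of attracting and repelling pairs of hyperbolic elements of $\Gamma$ is dense in $L\times L$. Concretely, I would left-multiply $S_1$ by $g$ (or by $g^{\pm1}$ repeatedly, iterating moves), replacing $a$ by $g^k a$. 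The aim is an element whose endpoints are moved off $\{b_\pm\}$, while $g^ka$ stays hyperbolic by a ping-pong estimate of the kind described in Remark~\ref{rem:limit_set}. Because only left multiplications of whole blocks by single generators are allowed, I will need the fact that $g^k a$, for large $k$ and hyperbolic $g$, is hyperbolic with attracting point near $g_+$.

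\textbf{Main obstacle.} I expect the hard step to be the sub-case where $g$ is elliptic and all the available donors are elliptic or fix the same end. There one must combine the moves carefully. My first attempt is to use block $\ell$ (which must move the common end, or else some other block does) as the donor for both $S_1$ and $S_2$, producing conjugate-like translates $ga$ and $g b$. I would then argue by Tits' Lemma and a displacement computation that these have distinct ends.
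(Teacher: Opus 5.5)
There is a genuine gap, and it sits exactly at the step you call the main obstacle and leave open.

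Your Step~2 tries to push the ends of a hyperbolic $a\in U_1$ off $\{b_\pm\}$ by left-multiplying $S_1$ by a donor $g$ that moves a common end. If $g$ is elliptic, there is no attracting point to steer towards, and hyperbolicity can simply be lost. For example, take $a$ the image of $\operatorname{diag}(p,p^{-1})$ and $g$ the image of $\left(\begin{smallmatrix}0&1\\-1&0\end{smallmatrix}\right)$ in $\PSL_2(\bbQ_p)$. Then $g$ moves both ends of $a$, yet $ga$ has trace $0$ and is elliptic. Using the same elliptic donor on $S_1$ and $S_2$ does not address this.

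Even with a hyperbolic donor the sketch does not close:
\begin{itemize}
\item The move $S_1\to g^kS_1$ replaces each entry $s$ by $g^ks$. So $\langle g^kS_1\rangle$ need not contain $g^ka$ unless $a$ is itself an entry of $S_1$.
\item Suppose $a$ and $b$ share both ends $x,y$ and $g$ fixes $y$ (it need only move $x$). Then every $g^{k}a$ still fixes $y$. Iterating a single donor therefore never separates the ends, and you would have to combine donors in a way you have not specified.
\end{itemize}

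Step~1's case split is also inconsistent. If every $\Fix(U_k)$ meets $\Fix(U_1)$, then every block is elliptic, so there is nothing hyperbolic to import. What Helly gives instead is two blocks $k,m\neq 1$ with disjoint fixed sets, and you must first create a hyperbolic element in the one of smaller index. Meanwhile a non-elliptic $U_k$ falls into your first sub-case, where Tits' Lemma does not apply.

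The missing idea is that you never need two hyperbolic elements with four distinct ends, and hence never need to steer endpoints. It suffices that one block of the pair is non-elliptic and the other breaks its invariant end or pair. The paper proceeds as follows.
\begin{itemize}
\item It first makes a single block non-elliptic, $S_i':=gS_i$ with $g\in\langle S_j\rangle$ and $i<j$, using Helly and Tits.
\item If $\langle S_i'\rangle$ is focal with end $x$, then some other block $S_j$ moves $x$, because $\Gamma$ does not fix $x$. Then $\langle S_i',S_j\rangle$ is already irreducible, with no further moves.
\item If $\langle S_i'\rangle$ is lineal with invariant pair $\{x,y\}$, either some $S_j$ preserves none of $x$, $y$, $\{x,y\}$, and you are done. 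Or two blocks $S_{j_1},S_{j_2}$ with $j_1<j_2$ preserve different pieces of this structure. Replacing $S_{j_1}$ by $gS_{j_1}$ for a suitable $g\in\langle S_{j_2}\rangle$ gives a block breaking all three, and it pairs irreducibly with $S_i'$.
\end{itemize}
What gets modified is the block doing the breaking, not the hyperbolic one, and always the one of lower index. This is what guarantees $S'_\ell=S_\ell$.
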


\begin{proof}
    Suppose that $\Gamma$ is irreducible, and that $\left<S_i,S_j\right>$ is elliptic, lineal or focal for every $i,j$. 
    By Proposition~\ref{prop:helly}, there must be some $i < j$ such that $\left<S_i,S_j\right>$ is not elliptic. Hence, by Lemma~\ref{lem:tits_lemma}, there exists $g\in \left<S_j\right>$ such that $\left<gS_i\right>$ is not elliptic. We therefore set $S'_i := gS_i$.
    If $\left<S'_i\right>$ is not irreducible, it must be either lineal or focal.
    \begin{itemize}
        \item If $\left<S'_i\right>$ is focal, let $x\in \partial T$ be its unique fixed point. 
        Since $x$ is not fixed by $\Gamma$, there exists some $j \neq i$ such that $\left<S'_i, S_j \right>$ is not focal; hence it is irreducible.
        \item If $\left<S'_i\right>$ is lineal, let $\left\{x,y\right\} \subseteq \partial T$ be the unique pair of points invariant under it. 
        If there exists $j \neq i$ such that $\left<S_j\right>$ stabilizes neither $x$, nor $y$, nor $\left\{x,y\right\}$, then $\left<S'_i,S_j\right>$ is irreducible. 
        Otherwise, since neither $x$, nor $y$, nor $\left\{x,y\right\}$ is invariant under $\Gamma$, one of the following occurs: either there exist $j_1 < j_2$ such that $\left<S'_i, S_{j_1}\right>$ fixes $x$ and $\left<S'_i,S_{j_2}\right>$ fixes $y$; or there exist $j_1 < j_2$ such that $\left<S'_i,S_{j_1}\right>$ fixes (without loss of generality) $x$ and $\left<S'_i,S_{j_2}\right>$ stabilizes $\left\{x,y\right\}$ but does not fix $x$; or there exist $j_1 < j_2$ such that $\left<S'_i,S_{j_1}\right>$ stabilizes $\left\{x,y\right\}$ but does not fix $x$ and $\left<S'_i,S_{j_2}\right>$ fixes (without loss of generality) $x$.
        In all three of these cases, there exists $g\in \left<S_{j_2}\right>$ such that neither $x$, nor $y$, nor $\left\{x,y\right\}$ is invariant under $\left<gS_{j_1}\right>$. Setting $S'_{j_1} := gS_{j_1}$, we have that $\left<S'_{j_1}, S'_i\right>$ is irreducible.
    \end{itemize}    

    Finally, note that since $i < j$ and $j_1 < j_2$, it must be that $S_\ell$ is unchanged.
\end{proof}

\begin{proposition} \label{prop:upgrade_to_irreducible}
    Let $n\ge 3$ and set $\Delta = \left<g_3, \dots, g_n \right>$. Assume that $\Gamma=\left<g_1,g_2,\Delta\right> < \PSL_2\left(K\right)$ is Zariski-dense and non-precompact and that $\Delta \neq 1$. Then there is a Nielsen-equivalent generating tuple $\left( g'_1,g'_2, g_3, \dots, g_n\right)$ and $\gamma \in \left<g'_1,g'_2\right>$ such that $\left<g'_2, \gamma \Delta \gamma^{-1} \right>$ is Zariski-dense and non-precompact.
\end{proposition}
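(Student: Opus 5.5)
The plan is to translate the goal into tree geometry and then split into cases according to the output of Proposition~\ref{prop:irreducible_detection}. By Lemma~\ref{lem:nonelementary_zariski_dense}, for finitely generated subgroups of $\PSL_2(K)$, being Zariski-dense and non-precompact is the same as being irreducible for the action on $T$. So $\Gamma$ is irreducible, and we must produce $g'_1,g'_2$ and $\gamma\in\left<g'_1,g'_2\right>$ with $\left<g'_2,\gamma\Delta\gamma^{-1}\right>$ irreducible.

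First apply Proposition~\ref{prop:irreducible_detection} to the partition $S_1=\{g_1\}$, $S_2=\{g_2\}$, $S_3=\{g_3,\dots,g_n\}$, so $\ell=3$. The moves $S_i\to gS_i$ ($i=1,2$) with $g$ in another block are Nielsen moves on $(g_1,g_2)$ that leave $g_3,\dots,g_n$ untouched, since $S_3$ is unchanged. We obtain $(g'_1,g'_2)$ such that some pair of blocks generates an irreducible group, and we split into three cases.
\begin{itemize}
\item If the pair is $\{2,3\}$, take $\gamma=1$ and we are done.
\item If the pair is $\{1,3\}$, swap $g'_1$ and $g'_2$ and take $\gamma=1$.
\item If the pair is $\{1,2\}$, so that $H:=\left<g'_1,g'_2\right>$ is irreducible, we need the argument below.
\end{itemize}

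In the remaining case, first apply further Nielsen moves inside $H$ so that $h:=g'_2$ becomes hyperbolic. If both generators are elliptic, then $\Fix(g'_1)\cap\Fix(g'_2)=\emptyset$, because $H$ is not elliptic; Tits' Lemma~\ref{lem:tits_lemma} then shows that replacing $g'_2$ by $g'_1g'_2$ gives a hyperbolic element. If $g'_1$ is hyperbolic, swap. Next fix $1\neq\delta\in\Delta$. A nontrivial element of $\PSL_2(K)$ fixes at most two points of $\partial T=\mathbb P^1(K)$, namely its eigenlines. Since $H$ is irreducible, its limit set $L$ is infinite and perfect, and attracting points of hyperbolic elements are dense in it (Remark~\ref{rem:limit_set}). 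Hence we can choose a hyperbolic $\eta\in H$ with $\delta\eta_+\neq\eta_+$ and $\eta_+\notin\{h_+,h_-\}$.

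Now set $\gamma=\eta^{-m}$ for large $m$. Then $\gamma^{-1}h_\pm=\eta^m h_\pm$ are two distinct points, and both converge to $\eta_+$ (this needs $h_\pm\neq\eta_-$, which we arrange by the choice of $\eta$, again using perfectness of $L$). Because $\delta$ acts continuously and moves $\eta_+$, for large $m$ it sends a small neighbourhood of $\eta_+$ off itself. Therefore $\delta$ fixes neither $\gamma^{-1}h_+$ nor $\gamma^{-1}h_-$, and does not preserve the pair $\{\gamma^{-1}h_+,\gamma^{-1}h_-\}$. Equivalently, $\gamma\delta\gamma^{-1}$ fixes neither $h_\pm$ and does not stabilise $\{h_+,h_-\}$.

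Finally, check that $\left<h,\gamma\Delta\gamma^{-1}\right>$ is irreducible using Definition~\ref{def:action_classification}. It contains the hyperbolic element $h$, so it is not elliptic. Any invariant boundary point or invariant pair for it must be invariant under $h$. A hyperbolic element's only invariant boundary points are $h_\pm$, and its only invariant pair is $\{h_+,h_-\}$. The previous step shows that $\gamma\delta\gamma^{-1}$ preserves none of these, so the group is neither focal nor lineal. Converting back through Lemma~\ref{lem:nonelementary_zariski_dense} gives Zariski density and non-precompactness.

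The main obstacle I expect is this last dynamical step, specifically ruling out that $\gamma\delta\gamma^{-1}$ swaps $h_+$ and $h_-$ (for example when $\delta$ is an involution) while keeping $\gamma$ inside $H$. The push-toward-$\eta_+$ trick, which uses only continuity of $\delta$ and $\delta\eta_+\neq\eta_+$, is what I would try first.
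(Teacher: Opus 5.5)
Your proposal is correct and follows the paper's proof essentially step for step. It uses the same application of Proposition~\ref{prop:irreducible_detection} to the blocks $\{g_1\},\{g_2\},\{g_3,\dots,g_n\}$ and the same reduction to $g_2'$ hyperbolic. It also uses the same trick of conjugating $\Delta$ by a large power of a hyperbolic element of $\left<g_1',g_2'\right>$, so that $\gamma^{-1}h_\pm$ lands in a neighbourhood $O$ with $\delta O\cap O=\emptyset$. You push toward $\eta_+$ with $\gamma=\eta^{-m}$ while the paper pushes toward $\theta_-$ with $\gamma=\theta^k$, which is the same argument with $\theta=\eta^{-1}$.

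One small imprecision: your simultaneous constraints on $\eta_+$ and $\eta_-$ follow from the density of pairs $(\gamma_+,\gamma_-)$ in $L\times L$ stated in Remark~\ref{rem:limit_set}, not from perfectness of $L$ alone.
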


\begin{proof}
    Setting $S_1 := \left\{g_1\right\}, S_2:=\left\{g_2\right\}, S_3 := \left\{g_3,\dots,g_n\right\}$, Proposition~\ref{prop:irreducible_detection} implies that there exists a Nielsen-equivalent generating tuple $\left(g'_1,g'_2,g_3,\dots,g_n\right)$ such that either $\left<g'_i, \Delta\right>$ is Zariski-dense and non-precompact for some $i\in \left\{1,2\right\}$, or $\left<g'_1,g'_2\right>$ is Zariski-dense and non-precompact. 
    In the former case the proposition holds (after possibly exchanging $g'_1$ and $g'_2$) for $\gamma=1$. 
    In the latter case, by Lemma~\ref{lem:tits_lemma} and Proposition~\ref{prop:helly}, we may assume that $g'_2$ is hyperbolic, and denote its attracting and repelling points by $x,y \in \partial T$. 
    
    For any $\gamma \in \left<g'_1, g'_2\right>$, $\left<g'_2, \gamma \Delta \gamma^{-1} \right>$ is irreducible if and only if neither $x$, nor $y$, nor $\left\{x,y\right\}$ is stabilized by $\gamma \Delta \gamma^{-1}$.
    Equivalently, if and only if neither $\gamma^{-1} x$, nor $\gamma ^{-1} y$, nor $\gamma ^{-1} \left\{x,y\right\}$ is stabilized by $\Delta$. Now, $\Delta$ cannot fix, pointwise, all of the limit set $L$ of $\left< g_1',g_2'\right>$, because an element fixing three points is trivial, while $L$ is uncountable. 
    Let $z \in L$ and $\delta \in \Delta$ be such that $\delta$ does not fix $z$, and let $O \subseteq L$ be an open neighborhood of $z$ such that $\delta O \cap O = \emptyset$.
    Then by Remark~\ref{rem:limit_set} we can find some hyperbolic element $\theta \in \left<g'_1,g'_2\right>$ whose repelling point is in $O$ and whose attracting point is neither $x$ nor $y$. 
    Taking $\gamma = \theta^k$ for a large enough value of $k$ yields $\gamma^{-1}x,\gamma^{-1}y \in O$, so that $\delta \in \Delta$ takes both of these points away from themselves and from each other.
    Therefore, $\left<g'_2, \gamma \Delta \gamma^{-1} \right>$ is Zariski-dense and non-precompact.
\end{proof}

\begin{proof}[Proof of Theorem~\ref{thm:torsion_free_redundancy}]
Set $g_i := f(x_i)$. 
By assumption, $\Gamma = \left<g_1,\dots,g_n\right>$ is dense and torsion-free. 
Theorem~\ref{thm:weid} yields a Nielsen-equivalent generating tuple $\left(g'_1,\dots , g'_n\right)$ with $g'_n \in \Ell$. 
We may assume $g'_i \ne 1$ for all $i$, as otherwise we are done. 
Proposition~\ref{prop:upgrade_to_irreducible} yields a Nielsen-equivalent tuple $\left(g''_1,g''_2, g_3'' = \gamma g'_3 \gamma^{-1},\dots,g_n''=\gamma g'_n \gamma^{-1} \right)$ such that $\Sigma := \left<g''_2, g''_3, \ldots, g''_n \right>$ is Zariski-dense and non-precompact. 
Since $g''_n = \gamma g'_n \gamma^{-1}$ is elliptic, and since it has infinite order by the assumption that $\Gamma$ is torsion-free, it must be that $\Sigma$ is nondiscrete. 
Thus $\Sigma$ is dense by Lemma~\ref{lem:zariski_trichotomy}, finishing the proof. 
\end{proof}

\section{Results for groups with torsion} \label{sec:torsion}

We now turn to proving Theorem~\ref{thm:finite_fix_redundancy}. We first observe that the answer for the redundancy question for finite groups is known for all finite subgroups of $\PSL_2\left(K\right)$, with a bound on the number of generators:

\begin{lemma} \label{lem:wiegold_PSL}
    Let $\Delta = \left<z_1,\dots,z_n\right> < \PSL_2\left(K\right)$ be finite, $n \ge 3$, and assume the residual characteristic of $K$ does not divide the order of any element in $\Delta$. Then there is a Nielsen-equivalent generating tuple $\left(1, z'_2,\dots,z'_n\right)$.
\end{lemma}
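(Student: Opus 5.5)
The plan is to reduce to the classification of finite subgroups of $\PSL_2(K)$ in the tame case, and then use known redundancy (Wiegold-type) results for those finite groups.

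Since $\Delta$ is finite, it fixes a vertex of the Bruhat--Tits tree. Hence $\Delta$ is conjugate into $\PSL_2(\mathcal O)$, with $\mathcal O$ the valuation ring of $K$. Reduction modulo the maximal ideal maps this into $\PSL_2(k)$, where $k$ is the residue field. The kernel of reduction is a pro-$p$ group, with $p$ the residual characteristic. By hypothesis no element of $\Delta$ has order divisible by $p$, so $\Delta$ contains no nontrivial element of $p$-power order. Therefore $\Delta$ meets the kernel trivially and embeds in $\PSL_2(k)$ as a subgroup of order prime to $p$. By Dickson's classification, a $p'$-subgroup of $\PSL_2(\bar k)$ is one of the following: cyclic, dihedral, $A_4$, $S_4$ or $A_5$. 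One could also argue directly in characteristic zero, where the finite subgroups of $\PSL_2$ of an algebraically closed field are exactly these groups. So the first step establishes $\Delta \in \{C_m, D_{2m}, A_4, S_4, A_5\}$.

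The second step proves the statement for each of these groups with $n\ge 3$ generators. What we need is that every generating $n$-tuple is Nielsen equivalent to one containing the identity, i.e.\ that the product replacement graph is connected for $n \ge 3$, or at least that every generating $n$-tuple is redundant.
\begin{itemize}
\item For cyclic groups, Euclid's algorithm on exponents already reduces any tuple to $(z,1,\dots,1)$.
\item For dihedral groups, the rotations form the cyclic subgroup $C_m$. Nielsen moves among the generators, run as a Euclidean algorithm on rotation exponents, produce a tuple with at most one nontrivial rotation and reflections. Multiplying pairs of reflections turns them into rotations. This leaves at most one rotation and one reflection, plus trivial entries, once $n\ge 3$.
\item For $A_4$, $S_4$, $A_5$, these are groups with $d(\Delta)=2$ and small order. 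Connectivity of the product replacement graph for $n \ge 3$ for them is classical: it holds for all solvable groups via Dunwoody's theorem for $n > d$, and for $A_5$ by Gilman's result, or by a direct check. Either citation gives an entry equal to $1$.
\end{itemize}

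The main obstacle is making the reduction to the list rigorous. This means verifying that the reduction map is injective on $\Delta$, i.e.\ that the congruence kernel is pro-$p$, and that the $p'$-hypothesis transfers correctly. For the final step I would simply cite the known results on Nielsen classes (Dunwoody for solvable groups, Gilman for $\PSL_2(\bbF_q)$, which includes $A_5\cong\PSL_2(\bbF_5)$ and $A_4 \cong \PSL_2(\bbF_3)$) rather than redo case computations, with the dihedral case handled by hand as above if needed.
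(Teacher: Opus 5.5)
Your proposal is correct and follows essentially the same route as the paper: identify $\Delta$ as cyclic, dihedral, $A_4$, $S_4$ or $A_5$, then invoke Dunwoody for the solvable cases and Gilman for $A_5\cong\PSL_2(\mathbb{F}_5)$. The differences are cosmetic. The paper simply cites \cite[Proposition 3.3]{CoSc} for the classification, whereas you derive it by conjugating into $\PSL_2(\mathcal O)$, observing that the pro-$p$ congruence kernel meets the $p'$-group $\Delta$ trivially, and applying Dickson; you also handle the cyclic and dihedral cases by hand.
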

\begin{proof}
    By~\cite[Proposition 3.3]{CoSc}, $\Delta$ is either a cyclic group, a dihedral group, $A_4$, $S_4$, or $A_5$. Cyclic groups, dihedral groups, $A_4$ and $S_4$ are solvable and generated by $\le 2$ elements, so in those cases the lemma holds by~\cite{Du}. $A_5 \cong \PSL_2\left(\bbF_5\right)$, so in that case the lemma holds by~\cite{Gi}.
\end{proof}

\begin{remark} \label{rem:qp_orders}
    If $K=\bbQ_p$ for $p \ge 5$ then by~\cite[p. 972]{Lu2} there is no element of order divisible by the residual characteristic $p$.
\end{remark}

\begin{proposition}
\label{prop:discrete-from-small-free-factors}
Let $p \ge 5$ and let
\[
f:F_n\longrightarrow \PSL_2(\bbQ_p),
\qquad
\Delta=f(F_n).
\]
Assume that:
\begin{enumerate}
    \item the image of every free factor of rank at most $3$ is
    discrete;
    \item every nontrivial finite order element of $\Delta$ fixes a
    unique vertex of the Bruhat--Tits tree.
\end{enumerate}
Then $\Delta$ is discrete.
\end{proposition}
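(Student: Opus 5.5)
The plan is to run Weidmann's theorem (Theorem~\ref{thm:weid}) repeatedly on $\Delta$ acting on the Bruhat--Tits tree $T$. We keep a partitioned generating set $\mathcal M=(S_1,\dots,S_\ell;H)$ for $\Delta$ that satisfies three invariants:
\begin{enumerate}
\item the entries of $\mathcal M$, after discarding trivial ones, are the images under $f$ of part of a basis of $F_n$, and this basis is Nielsen-equivalent to the standard one;
\item each $U_i=\left<S_i\right>$ is a nontrivial finite group generated by $|S_i|\le 2$ entries;
\item each $U_i$ fixes a vertex $x_i$.
\end{enumerate}
The special moves in Weidmann's theorem are Nielsen moves, so invariant (1) is preserved by them. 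We start with $\ell=0$ and $H=(f(x_1),\dots,f(x_n))$. Trivial entries are simply thrown away, which keeps (1).

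The first step is to check the hypothesis of Theorem~\ref{thm:weid}, namely that $T_i=\{x_i\}$. Let $1\ne u\in U_i$. Since $u\in U_i$ fixes $x_i$, we have $x_i\in\Fix(u)$. By assumption (2), $u$ fixes a unique vertex, so $\Fix(u)=\{x_i\}$. Hence $T_i=\bigcup_{1\ne u\in U_i}\Fix(u)=\{x_i\}$.

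Next we apply Theorem~\ref{thm:weid} and treat each outcome.

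In case (2), some $h\in H'$ is elliptic.
\begin{itemize}
\item If $h$ has infinite order, then $\left<h\right>$ is an infinite subgroup of a compact open vertex stabilizer, so it is not discrete. But $\left<h\right>$ is the image of a rank-$1$ free factor, contradicting hypothesis (1).
\item If $h$ has finite order, we move $h$ out of $H'$ into a new block $S_{\ell+1}=\{h\}$. This keeps all three invariants.
\end{itemize}

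In case (3), $x'_i=x'_j$, and we merge the two blocks, say $S'_i=\{a,b\}$ and $S'_j=\{c,d\}$. All of $a,b,c,d$ are images of distinct basis elements.
\begin{itemize}
\item The group $\left<a,b,c\right>$ is the image of a free factor of rank $\le3$, so by hypothesis (1) it is discrete. It fixes $x_i$, so it lies in a compact group; being discrete and compact, it is finite.
\item Since $p\ge5$, Remark~\ref{rem:qp_orders} applies, and Lemma~\ref{lem:wiegold_PSL} gives Nielsen moves inside this triple yielding $(1,a',b')$.
\item Repeating the same argument with $(a',b',d)$ shows that $U'_i\cup U'_j$ generates a finite group, generated by $2$ entries and fixing $x_i$. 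We replace the two blocks by this single block and discard the trivial entries.
\end{itemize}
All invariants survive.

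Each reduction strictly decreases the pair (total number of nontrivial entries, $|H|$, number of blocks) in lexicographic order. So after finitely many steps we reach outcome (1):
\[
\Delta=U'_1*\cdots*U'_\ell*F(H'), \qquad \Delta_{x'_i}=U'_i .
\]
The final step is to deduce discreteness from this decomposition. A subgroup of $\PSL_2(\bbQ_p)$ is discrete if and only if its vertex stabilizers in $T$ are finite, because vertex stabilizers in $G$ are compact open. So I would show that every vertex stabilizer $\Delta_v$ is finite. The expected route is via the structure of elliptic subgroups of this free product. Weidmann's proof gives a $\Delta$-equivariant map from the Bass--Serre tree of the splitting to $T$, under which each $U'_i$ goes to $x'_i$. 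From this, every element of $\Delta$ that is elliptic in $T$ should be conjugate into some $U'_i$. Moreover, a subgroup fixing $v$ should be conjugate into a single $U'_i$, using Helly's theorem (Proposition~\ref{prop:helly}) and the fact that finitely many conjugates of distinct $U'_i$'s with a common fixed point must come from one vertex. Then $\Delta_v$ is finite.

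I expect this last step to be the main obstacle: it requires extracting from Weidmann's construction that no new elliptic elements appear, i.e.\ that $\Delta_v\subseteq gU'_ig^{-1}$. My first attempt would be to argue that the equality $\Delta_{x'_i}=U'_i$ for all $i$, together with the absence of elliptic entries in $H'$, forces the action to be the Bass--Serre action with finite edge stabilizers. Failing that, a cruder alternative is to show that an infinite $\Delta_v$ would yield, through the same reduction, an infinite elliptic image of a small free factor.
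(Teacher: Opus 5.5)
Your reduction scheme is correct and is essentially the paper's. The paper groups the elliptic entries by their fixed vertex and proves each block finite by induction on its size, using that an elliptic image of a free factor of rank at most $3$ is finite together with Lemma~\ref{lem:wiegold_PSL} (via Remark~\ref{rem:qp_orders}). You run the same mechanism incrementally, collapsing each merged block back to at most two entries. Your check that $T_i=\{x_i\}$, your handling of outcomes (\ref{w:new_elliptic}) and (\ref{w:combine:elliptics}), and your termination count are fine.

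The gap is the final step, which you leave open. You aim to prove that \emph{every} vertex stabilizer $\Delta_v$ is finite by extracting a Bass--Serre embedding from Weidmann's proof, which Theorem~\ref{thm:weid} as stated does not give you. You do not need it. Outcome (\ref{w:free_prod}) already gives $\Delta_{x'_i}=U'_i$, which is finite by your invariant, and $G_{x'_i}$ is open in $\PSL_2(\bbQ_p)$. So $\Delta\cap G_{x'_i}$ is a finite open neighbourhood of $1$ in $\Delta$, hence $\{1\}$ is open in $\Delta$ and $\Delta$ is discrete. In your own formulation: any two vertex stabilizers $G_v,G_w$ are commensurable compact open subgroups, so one finite $\Delta_v$ forces every $\Delta_w$ to be finite. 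This one-line observation is exactly how the paper finishes.

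One caveat, which the paper's one-line conclusion also passes over: this argument needs $\ell\ge 1$ in the terminal configuration. In your scheme $\ell$ never returns to $0$ once a block has been created. However, if outcome (\ref{w:free_prod}) is reached immediately from $(\,;H)$, the stabilizer clause is vacuous, and $\Delta=F(H')$ by itself says nothing about the topology. In that case you must invoke the stronger information contained in Weidmann's argument, the same source as the clause $G_{x'_i}=U'_i$: namely, that $\Delta$ then acts freely on $T$, which gives discreteness at once. The paper uses this same fact implicitly in the proof of Theorem~\ref{thm:torsion_free_redundancy}, where outcome (\ref{w:free_prod}) with $\ell=0$ has to be excluded for a dense group.
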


\begin{proof}
Let $\mathcal M = \left(f(x_1),f(x_2),\ldots, f(x_n)\right)$ be the images of the standard generators. If some $f\left(x_i\right)$ is trivial, we throw it away. We will apply Nielsen transformations, without changing notation, in order to simplify the discussion.

Every elliptic entry $g$ of $\mathcal{M}$ is the image under $f$ of a primitive element in $F_n$. Hence $\left< g\right>$ is the image of a rank-one free factor and is discrete. Since it is contained
in a compact vertex stabilizer, it is finite. By assumption, every such $g$ fixes a unique vertex. Thus partition \[\mathcal M = (S_1,S_2,\ldots,S_{\ell};H),\]
where $H$ consists of all hyperbolic generators and the elliptic generators are grouped according to their unique fixed points. 

We claim that every elliptic subgroup $U_i = \left< S_i \right>$ is finite. Assume, without loss of generality, that $S_i = \left(z_1,z_2,\ldots, z_k\right)$, and proceed by induction on $k$. If $k \le 3$ then $U_i$, by assumption, is discrete, and a discrete elliptic subgroup must be finite. Otherwise the group $\left< z_{k-2},z_{k-1},z_{k} \right>$ is finite and, by Lemma~\ref{lem:wiegold_PSL} (which holds due to Remark~\ref{rem:qp_orders}), we can replace these by a Nielsen-equivalent triple $\left(z_{k-2}',z_{k-1}',1\right)$. Thus $U_i$ is actually generated by one fewer generators, and we finish by applying our induction hypothesis. 

We conclude that $T_i = \bigcup_{1\neq g \in U_i} \Fix(g) = \{x_i\}$ reduces to a single vertex for every $1 \le i \le \ell$. 
Thus we may apply Weidmann's Theorem~\ref{thm:weid} and pass to a Nielsen-equivalent tuple satisfying one of the three possibilities mentioned there. In case (\ref{w:free_prod}) the stabilizers $U_i = \Delta_{x_i}$ are finite and $\Delta$ is discrete as required. If we land in cases (\ref{w:new_elliptic}) or (\ref{w:combine:elliptics}), we repeat the same procedure, throwing away any generators that may have become trivial. In case (\ref{w:new_elliptic}) we have reduced the number of hyperbolic generators; in case (\ref{w:combine:elliptics}) we have reduced the number $\ell$ of elliptic groups. So it is easy to see that after repeating the procedure at most $2n$ times, we must arrive at the discrete free product configuration of case (\ref{w:free_prod}).
\end{proof}

\begin{proof}[Proof of Theorem~\ref{thm:finite_fix_redundancy}]
    If every free factor $L < F_n$ with $\operatorname{rank}\left(L\right) \le n-2$ has discrete image, then by Proposition~\ref{prop:discrete-from-small-free-factors} $\Gamma := f\left(F_n\right)$ is discrete, in contradiction to the assumption that it is dense. Hence, setting $g_i := f\left(x_i\right)$, there exists a Nielsen-equivalent generating tuple $\left(g'_1,\dots , g'_n\right)$ such that $\left<g'_3,\dots,g'_{n}\right>$ is nondiscrete. From here we conclude exactly as we did in the proof of Theorem~\ref{thm:torsion_free_redundancy}.
\end{proof}

\bibliographystyle{alpha}
\bibliography{sn-bibliography}
\end{document}